\newenvironment{proof}{\noindent {\bf Proof:}}{\hfill $\Box$}
\newtheorem{theorem}{Theorem}
\newtheorem{lemma}{Lemma}
\newtheorem{assumption}{Assumption}
\newtheorem{remark}{Remark}
\def\BState{\State\hskip-\ALG@thistlm}
\newcommand{\new}[1]{{\color{black}#1}}
\newcommand{\bs}{\boldsymbol}
\newcommand{\mr}[1]{\mathrm{#1}}
\newcommand{\Mc}{\mathcal{M}}
\newcommand{\Rb}{\mathbb{R}}
\newcommand{\Nb}{\mathbb{N}}
\newcommand{\Xf}{\bs X}
\newcommand{\Ind}{\mathbb{I}}
\title{\bf Convex computation of extremal invariant measures of nonlinear dynamical systems and Markov processes}
\begin{document}

\author{Milan Korda$^{1,2}$, Didier Henrion$^{1,2,3}$, Igor Mezi{\'c}$^4$}

\footnotetext[1]{CNRS; LAAS; 7 avenue du colonel Roche, F-31400 Toulouse; France. {\tt korda@laas.fr, henrion@laas.fr}}
\footnotetext[2]{Faculty of Electrical Engineering, Czech Technical University in Prague,
Technick\'a 2, CZ-16626 Prague, Czech Republic.}
\footnotetext[3]{Universit\'e de Toulouse; LAAS; F-31400 Toulouse; France.}
\footnotetext[4]{University of California, Santa Barbara,\; {\tt mezic@engineering.ucsb.edu}}

\date{Draft of \today}

\maketitle

\begin{abstract}
We propose a convex-optimization-based framework for computation of invariant measures of polynomial dynamical systems and Markov processes, in discrete and continuous time. The set of all invariant measures is characterized as the feasible set of an infinite-dimensional linear program (LP). The objective functional of this LP is then used to single-out a specific measure (or a class of measures) extremal with respect to the selected functional such as physical measures, ergodic measures, atomic measures (corresponding to, e.g., periodic orbits) or measures absolutely continuous w.r.t. to a given measure. The infinite-dimensional LP is then approximated using a standard hierarchy of finite-dimensional semidefinite programming problems (SDPs), the solutions of which are truncated moment sequences, which are then used to reconstruct the measure. In particular, we show how to approximate the support of the measure as well as how to construct a sequence of weakly converging absolutely continuous approximations. \new{As a byproduct, we present a simple method to certify the non-existence of an invariant measure, which is an important question in the theory of Markov processes}. The presented framework, where a convex functional is minimized or maximized among all invariant measures, can be seen as a generalization of and a computational method to carry out the so called ergodic optimization, where linear functionals are optimized over the set of invariant measures. Finally, we also describe how the presented framework can be adapted to compute eigenmeasures of the Perron-Frobenius operator.
\end{abstract}

\begin{center}\small
{\bf Keywords:} Invariant measure, convex optimization, ergodic optimization, physical measure.
\end{center}

\section{Introduction}
We propose a convex-optimization-based method for approximation of invariant measures. The method is based on the observation that the set of all invariant measures associated to a deterministic nonlinear dynamical system or a stochastic Markov process is given by the set of solutions to a \emph{linear} equation in the space of Borel measures. The problem of finding an invariant measure can therefore be formulated as the feasibility of an infinite-dimensional linear programming problem (LP). Adding an objective functional to this LP allows one to \emph{target} a particular invariant measure (or a class of invariant measures) such as the physical measure, ergodic measures, absolutely continuous measures, atomic measures etc. The formulation is flexible in the sense that whenever a variational characterization of a given class of measures is known, then it can be used within the proposed framework. The approach is functional analytic in nature, by and large devoid of geometric or topological considerations. The only underlying assumption is that the dynamics is polynomial. This assumption is made for computational convenience even though the approach is far more general, applicable to any algebra of functions closed under function composition (in discrete-time) or differentiation (in continuous-time).

The infinite-dimensional LP in the space of Borel measures is subsequently approximated along the lines of the classical \emph{Lasserre hierarchy}~\cite{lasserre2001global} using a sequence of finite-dimensional convex semidefinite programming problems (SDPs). The optimal values of the SDPs are proven to converge from below to the optimal value of the infinite-dimensional LP (for the analysis of the speed of convergence in a related setting of optimal control, see~\cite{korda2017convergence_opt}). The outcome of the SDP is an approximate truncated moment sequence of the invariant measure targeted; this sequence is proven to converge weakly to  the moment sequence of the target invariant measure, provided this measure is unique (otherwise every accumulation point of the sequence corresponds to an invariant measure). 

As a secondary contribution we describe a numerical procedure to approximate the support and density of the invariant measure using the truncated moment sequence obtained from the SDP. For the former, we provide confidence intervals enclosing, in the limit, a prescribed portion of the support; this is achieved using the Christoffel polynomial, an interesting object constructed from the Christoffel-Darboux kernel, which has already been utilized for support approximation in machine learning applications (e.g.,~\cite{lasserre2017empirical, pauwels2016nips}). For the latter, we construct a sequence of absolutely continuous measures with polynomial densities converging weakly to the target measure.

\new{An interesting by product of the approach presented is the possibility to certify the \emph{non-existence} of an invariant measure, which is particularly pertinent for Markov processes. Such certification boils down to proving the emptiness of a spectrahedron defining the feasible set of the SDPs solved.}

Finally, we also describe a generalization of the proposed approach to compute \emph{eigenmeasures} of the \emph{Perron-Frobenius} operator corresponding to a given (possibly complex) eigenvalue, with the invariant measures being a special case corresponding to eigenvalue one.


This work is a continuation of the movement to apply convex optimization-based techniques to nonconvex problems arising from dynamical systems theory and control. For example, the related problem of invariant set computation was addressed in~\cite{kordaMCI} whereas~\cite{lasserre2008nonlinear,gaitsgory2009linear,korda2016controller}
addressed optimal control; \cite{ozay2014convex}, \cite{ozay2015set} adressed model validation and switching system identification, respectively. The problem addressed here, i.e., invariant measure computation, was also addressed by this approach in~\cite{henrion2012kybernetika} in one spatial dimension; this work can therefore be seen as a generalization of~\cite{henrion2012kybernetika} to multiple dimensions and with a far more detailed theoretical and computational analysis. In the concurrent work \cite{MagronHenrionForets} the authors are also applying the Lasserre hierarchy for approximately computing invariant measures for polynomial dynamical systems, but there is no convex functional to be minimized and the focus is on distinguishing measures with different regularity properties (singular vs absolutely continuous).


Let us also mention the optimization-based approaches to invariant measure computation~\cite{bollt2005path,junge2017sighting}. These approaches are based on non-convex optimization and therefore have to deal with its inherent difficulties such as the existence of suboptimal local minimizers, saddle points or degeneracy. Therefore, contrary to the proposed convex-optimization based approach, these works do not provide convergence guarantees, despite being built on interesting ideas and showing promising practical performance.

The presented framework, where a convex user-specified functional is minimized among all invariant measures, can be seen as a generalization of and a computational method to carry out the so called \emph{ergodic optimization}~\cite{jenkinson2006ergodic,jenkinson2019,bochi2018}, where linear functionals are optimized among invariant measures (therefore leading to ergodic measures as the optimizers since these are the extreme points of the set of all invariant measures, hence the name ergodic optimization). 

\new{The presented approach based on optimization over Borel measures has a convex dual as an optimization over continuous functions that can be approximated by polynomial sum-of-squares. This line of research has been investigated independently for various problems from dynamical systems (e.g., \cite{chernyshenko,fantuzzi,goluskin}). Of particular relevance to this work is~\cite{tobasco} which is dual to our approach in the continuous-time setting and when the objective functional in our approach is restricted to be linear. A by-product of our work is therefore an asymptotic convergence guarantee for the bounds obtained by~\cite{tobasco}, provided that strong duality holds.
}

The paper is organized as follows. Section~\ref{sec:probState} formally states the problem of invariant measure computation. Section~\ref{sec:momHierarch} describes the moment hierarchies and applies them to the invariant measure computation problem. Section~\ref{sec:reconst} describes the reconstruction of the invariant measure from its moments. Section~\ref{sec:applications} discusses several concrete invariant measures to be targeted via the choice of objective functional to be optimized. Section~\ref{sec:contDet} describes an extension to continuous time systems and Section~\ref{sec:markov} to Markov processes (both in discrete and continuous time) \new{as well as discusses how to certify non-existence of invariant measures}. Section~\ref{sec:pf} extends the method to eigenmeasures of he Perron-Frobenius operator and Section~\ref{sec:numEx} presents numerical examples.

\section{Problem statement}\label{sec:probState}
For concreteness we present the approach for deterministic discrete-time dynamical systems. The case of stochastic Markov processes is treated in Section~\ref{sec:markov}; the continuous time cases are treated in Section~\ref{sec:contDet} and \ref{sec:contStoch}.

Consider therefore a deterministic discrete-time nonlinear dynamical system
\begin{equation}\label{eq:sys}
x^+ = T(x),
\end{equation}
where $x \in \mathbb{R}^n$ is the state, $x^+ \in \mathbb{R}^n$ is the successor state and each of the $n$ components of the mapping $T : \mathbb{R}^n \to \mathbb{R}^n$ is assumed to be a multivariate polynomial. 

An invariant measure for the dynamical system~(\ref{eq:sys}) is any nonnegative Borel measure $\mu$ satisfying the relation
\begin{equation}\label{eq:invarGen}
\mu(T^{-1}(\bf A)) = \mu(\bf A)
\end{equation}
for all Borel measurable $\bf A \subset \mathbb{R}^n$. In this paper we restrict our attention to invariant measures with support included in some compact set $\bs X \subset \Rb^n$. With this assumption, the relation~(\ref{eq:invarGen}) reduces to
\begin{equation}\label{eq:invar}
\int_{\bs X} f \circ T\,d\mu = \int_{\bs X} f \,d\mu
\end{equation}
for all $f \in C(\bs X)$.

When a measure $\mu$ is supported on a compact set $\bs X$, it follows from the Stone-Weierstrass Theorem that it is entirely characterized by its moment sequence $\bs y := (\bs y_\alpha)_{\alpha \in {\mathbb N}^n}
\in \Rb^\infty$, where
\begin{equation}\label{eq:mom_def}
\bs y_\alpha = \int_{\bs X} x^\alpha \, d\mu(x),
\end{equation}
with $x^\alpha:= x_1^{\alpha_1} \cdot\ldots\cdot x_n^{\alpha_n}$ and $\alpha \in \mathbb{N}^n$ running over all $n$-tuples of nonnegative integers and with $\Rb^\infty$ denoting the space of all real-valued sequences. In particular, for the choice $f(x)=x^{\alpha}$, relation (\ref{eq:invar}) becomes
\begin{equation}\label{eq:invarpoly}
\int_{\bs X} T^{\alpha}(x)\,d\mu(x) = \int_{\bs X} x^{\alpha} \,d\mu(x)
\end{equation}
for all $\alpha \in \mathbb{N}^n$, where $T^\alpha (x) := T_1(x)^{\alpha_1}\cdot\ldots\cdot T_n(x)^{\alpha_n}$. Since $T$ is a polynomial, (\ref{eq:invarpoly}) is a \emph{linear} constraint on the moments that can be written as
\begin{equation}\label{eq:invarmom}
A(\bs y) = 0,
\end{equation}
where $A : \Rb^\infty  \to \Rb^\infty$ is a linear operator.

We remark that (\ref{eq:invarmom}) characterizes all invariant measures associated to~(\ref{eq:sys}) with support in $\bs X$. In order to single out one invariant measure of interest we propose to use optimization. In particular, we propose to solve the infinite-dimensional convex optimization problem
\begin{equation}\label{opt:lp_inf}
\begin{array}{lll}
 \min\limits_{\bs y \in \bs M(\bs X)} & F(\bs y) \\
 \mathrm{s.t.} & A(\bs y) = 0\\
&  \boldsymbol y_0 = 1\\
\end{array}
\end{equation}
where the minimization is w.r.t. a sequence $\bs y$ belonging to the convex cone
\[
\bs M(\bs X):=\{\bs y \in \Rb^\infty \: :\: \bs y_\alpha = \int_{\bs X} x^\alpha d\mu(x), \: \alpha \in {\mathbb N}^n, \: \mu \in \Mc(\bs X)_+\}
\]
of moments of non-negative Borel measures on $\bs X$, the objective functional $F : \Rb^\infty \to \Rb $ is convex, and the constraint $\bs y_0 = 1$ is a normalization constraint enforcing that the measure is a probability measure.

We note that, since $T$ is polynomial and hence continuous and $\bs X$ is compact, the Krylov-Bogolyubov Theorem ensures that there exists at least one invariant measure for~(\ref{eq:sys}) and hence the optimization problem~(\ref{opt:lp_inf}) is always feasible.

\begin{remark}[Role of the objective function]
The role of $F$ is to \emph{target} or single out a specific invariant measure from the set of all invariant probability measures characterized by the constraints of~(\ref{opt:lp_inf}). In principle, $F$ can be any convex functional that  facilities this. In particular, it can be extended-valued (i.e. equal to $+\infty$), therefore encoding any constraints of interest such as $\mu$ being absolutely continuous or singular w.r.t. to a given measure. See Section~\ref{sec:applications} for concrete choices of $F$.
\end{remark}

A typical example encountered in practice for the choice of $F$ is
\begin{equation}\label{eq:ls}
F(\bs y) = \sum_{|\alpha| \le d} (\bs y_{\alpha} - \bs z_\alpha)^2,
\end{equation}
where $(\bs z_\alpha)_{|\alpha|\le d} $ is a given \emph{finite} vector of moments of total degree no more than $d$. The moments $\bs z_\alpha$ can be estimates of the first few moments of the invariant measure that we wish to compute obtained, e.g., from \new{observed} data or \new{by analytical reasoning (e.g., based on the symmetries of the problem)}. The optimization problem~(\ref{opt:lp_inf}) then seeks among all invariant measures $\mu$ the one which minimizes the discrepancy between the first moments $(\bs y_\alpha)_{|\alpha|\le d}$ of $\mu$ and the given moments $(\bs z_\alpha)_{|\alpha|\le d}$. Of course, the least-squares criterion in $(\ref{eq:ls})$ can be replaced by other convex metrics measuring the discrepancy between two truncated moment sequences.

\section{Moment hierarchy}\label{sec:momHierarch}
In this section we describe a hierarchy of finite dimensional convex optimization problems approximating the infinite-dimensional problem~(\ref{opt:lp_inf}) and prove that the solutions obtained from these approximations converge to a solution of~(\ref{opt:lp_inf}). For this we assume that the objective function $F(\bs y)$ in~(\ref{opt:lp_inf}) depends only on finitely many moments $(\bs y_\alpha)_{|\alpha| \le d}$.

The finite-dimensional approximations are derived from the so-called Lasserre hierarchy of approximations to the moment cone. In particular, we use semidefinite programming representable outer approximations to this cone and in addition we truncate the first equality constraint of~(\ref{opt:lp_inf}) by imposing the linear constraint (\ref{eq:invarpoly}) only for $f(x) = x^\alpha$, $|\alpha| \le k$, i.e., for all monomials of degree no more than $k$. By linearity of the constraint, this implies that the constraint is satisfied for all polynomials of degree no more than $k$. The degree $k$ is called relaxation degree.

Before writing down the finite-dimensional approximation of~(\ref{opt:lp_inf}), we first describe the construction of the finite-dimensional outer approximations to~$\bs M(\bs X)$.

\subsection{Finite-dimensional approximations of the moment cone}
Here we describe the semidefinite-programming representable outer approximation to $\bs M(\bs X)$. For this we assume that the compact set $\bs X$ is of the form\footnote{A set of the form~(\ref{eq:X}) is called basic semialgebraic; this class of sets is very rich, including balls, boxes, ellipsoids, discrete sets and various convex and non-convex shapes.}
\begin{equation}\label{eq:X}
\bs X := \{x \in \mathbb{R}^n \mid g_i(x) \ge 0, \;i=1,\ldots, n_g \}
\end{equation}
with $g_i$ being multivariate polynomials. Let us denote the unit polynomial by $g_0(x) := 1$.

The outer approximation $\bs M_d(\bs X)$ of degree $d$, $d$ even, is
 \begin{equation}\label{eq:Msup_def}
 \bs M_d(\bs X) := \{ \bs y \in \Rb^\infty \: :\:  M_d(g_i\: \bs y) \succeq 0, \;i=0,1,\ldots,n_g \},
 \end{equation}
 where $\cdot \succeq 0$ denotes positive semidefiniteness of a matrix and $M_d(g_i\:\bs y)$ are the so-called localizing moment matrices, to be defined below.
The convex cone $\bs M_d(\bs X)$ is an outer approximation to $\bs M(\bs X)$ in the sense that for any non-negative measure $\mu$ on $\bs X$ the moment vector $\bs y$ of $\mu$ belongs to $\bs M_d(\bs X)$.
%

The localizing moment matrices $M_d(g_i\:\bs y)$ are defined by \begin{equation}\label{eq:locMat}
M_d(g_i\:\bs y) = l_{\bs y}^d( g_i v_{d_i} v_{d_i}^\top),
 \end{equation}
where $d_i = \lfloor (d - \mr{deg}\,g_i )/ 2\rfloor$,
 \[
 v_d(x):=(x^\alpha)_{|\alpha|\leq d}
 \] 
and where the Riesz functional  $\ell^d_{\bs y}: \Rb[x]_d \to \Rb$ is defined for any $f  = \sum\limits_{|\alpha|\le d}\bs f_\alpha x^\alpha$ by
 \begin{equation}\label{eq:riesz}
l^d_{\bs y}(f) = \sum_{ |\alpha| \le d} \bs f_\alpha \bs y_\alpha.
\end{equation}
This functional mimicks integration with respect to a measure; in particular when $\bs y\in \Rb^{\binom{n+d}{d}}$ is a truncated moment vector of a measure $\mu$, then $l^d_{\bs y}(f) = \int f \, d\mu$ for any $f\in\Rb[x]_d$. In this case, the localizing matrices $M_d(g_i\:\bs y) $ are necessarily positive semidefinite, corresponding to the fact that $\int g_ip^2\,d\mu \ge 0$ for any polynomial $p$. Importantly, the following fundamental converse result states that if localizing moment matrices are positive semidefinite \emph{for all} $d \in \Nb$, then necessarily $\bs y$ is a moment vector of a nonnegative measure supported on $\bs X$.
In order for this to hold, the defining polynomials $g_i$ need  to satisfy the so-called Archimedian condition:
\begin{assumption}\label{as:arch}
There exist polynomials $\sigma_i$ and a constant $r\in\mathbb R$ such that 
\begin{equation}\label{eq:archCond}
r^2 - x^\top x = \sum_{i=0}^{n_g} \sigma_i g_i,
\end{equation}
where $\sigma_i = \sum_{j} p_{i,j}^2 $ with $p_{i,j}$ polynomial, i.e., each $\sigma_i$ is a sum of squares of other polynomials.
\end{assumption}
Assumption~\ref{as:arch} is an algebraic certificate of compactness of $\bs X$ because this assumption implies that $\bs X \subset \{x\mid x^\top x \le r^2\}$. Since $\bs X$ is assumed compact, this is a non-restrictive condition because a redundant constraint of the form $g_i = r^2 - x^\top x$ can always be added to the definition of $\bs X$ for a sufficiently large $r$ in which case Assumption~\ref{as:arch} is satisfied trivially.

\begin{theorem}[Putinar~\cite{putinar}]\label{thm:put}
Suppose that Assumption~\ref{as:arch} holds and that $\bs y \in {\bs M}_d(\bs X)$  for all $d \ge 0$.
Then there exists a unique non-negative measure $\mu$ on $\bs X$ such that (\ref{eq:mom_def}) holds for all $\alpha \in \mathbb{N}^n$.
\end{theorem}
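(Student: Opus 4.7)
The plan is the classical Riesz--Haviland strategy: build a positive linear functional from $\bs y$ and apply a representation theorem.

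First, I would define the Riesz functional $L: \Rb[x] \to \Rb$ by $L(x^\alpha) := \bs y_\alpha$, extended linearly. The hypothesis $M_d(g_i\,\bs y) \succeq 0$ for all $d \ge 0$ translates exactly to $L(p^2 g_i) \ge 0$ for every polynomial $p$ and every $i \in \{0,1,\ldots,n_g\}$ (with $g_0 = 1$), and hence by linearity $L$ is non-negative on the quadratic module $Q(g) := \{\sum_{i=0}^{n_g} \sigma_i g_i : \sigma_i \text{ sums of squares}\}$.

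Second, I would invoke the algebraic core of the argument, Putinar's positivstellensatz: under Assumption~\ref{as:arch}, every polynomial strictly positive on $\bs X$ lies in $Q(g)$. Combined with the previous step, this gives $L(f) \ge 0$ whenever $f > 0$ on $\bs X$, and applying it to $f + \varepsilon$ and letting $\varepsilon \downarrow 0$ yields $L(f) \ge 0$ for every polynomial $f$ non-negative on $\bs X$.

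Third, to pass to a measure I need continuity of $L$ with respect to the sup-norm on $\bs X$. The Archimedean decomposition gives $r^2 - x^\top x \in Q(g)$, so $L(x^\top x) \le r^2 L(1)$; iterating this with the Cauchy--Schwarz inequality for the moment matrix $M_d(\bs y) \succeq 0$ (namely $L(x^\alpha)^2 \le L(x^{2\alpha}) L(1)$) yields the uniform bound $|L(x^\alpha)| \le r^{|\alpha|} L(1)$. Together with positivity on polynomials non-negative on $\bs X$, the inequality $\|p\|_{C(\bs X)} \pm p \ge 0$ on $\bs X$ forces $|L(p)| \le L(1)\, \|p\|_{C(\bs X)}$ for every polynomial $p$. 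By Stone--Weierstrass density, $L$ extends uniquely to a positive bounded linear functional on $C(\bs X)$, and the Riesz representation theorem produces a unique non-negative Borel measure $\mu$ on $\bs X$ such that $\int_{\bs X} x^\alpha\, d\mu = \bs y_\alpha$ for all $\alpha$. Uniqueness of $\mu$ follows again from Stone--Weierstrass: two measures on the compact set $\bs X$ with identical polynomial moments must agree on $C(\bs X)$, hence as Borel measures.

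The main obstacle is the invocation of Putinar's positivstellensatz, whose proof is not routine: it relies on a Hahn--Banach separation of $Q(g)$ from polynomials that are negative somewhere on $\bs X$, together with a closure argument in which the Archimedean hypothesis is indispensable to control the localizing multipliers. In a self-contained write-up I would isolate this as a lemma (or simply cite~\cite{putinar}), since it is the only step in the plan where deep algebraic content is used; all other steps are functional-analytic bookkeeping around the Riesz representation theorem.
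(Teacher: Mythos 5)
The paper offers no proof of this statement to compare against: it is quoted as Putinar's theorem and delegated entirely to the citation~\cite{putinar}. Your sketch is a correct rendering of the standard modern derivation (as in Lasserre's or Schm\"udgen's monographs): the hypothesis $M_d(g_i\,\bs y)\succeq 0$ for all $d$ is exactly nonnegativity of the Riesz functional $L$ on the quadratic module, Putinar's Positivstellensatz upgrades this to $L(f)\ge 0$ for every $f\ge 0$ on $\bs X$ (via the $f+\varepsilon$ trick), the inequality $\|p\|_{C(\bs X)}\pm p\ge 0$ then gives $|L(p)|\le L(1)\,\|p\|_{C(\bs X)}$, and Stone--Weierstrass plus the Riesz representation theorem produce the measure; uniqueness is the usual moment-determinacy argument on a compact set. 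Two remarks. First, the intermediate estimate $|L(x^\alpha)|\le r^{|\alpha|}L(1)$ obtained from the Archimedean identity and Cauchy--Schwarz is logically redundant in your write-up, since the sup-norm bound already follows from positivity of $L$ on polynomials nonnegative on $\bs X$; it is, however, precisely the estimate the authors redo by hand in the proof of Theorem~\ref{thm:conv}, where Putinar positivity is unavailable at finite relaxation order, so it is worth keeping. Second, a caution about circularity: in Putinar's original paper the logical order is the reverse of yours --- the moment-problem statement is proved first (by operator-theoretic arguments) and the Positivstellensatz is deduced from it --- so if you present the Positivstellensatz as the single cited black box, you should cite a source that proves it independently of the moment theorem (such proofs exist, e.g.\ via Jacobi's representation theorem or the Hahn--Banach separation argument you allude to). With that caveat, the proposal is sound.
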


 \subsection{Approximation of the infinite-dimensional convex problem}\label{sec:approx}
 Now we are ready to write down the finite-dimensional approximation to~(\ref{opt:lp_inf}). The first step in the approximation is to impose the equality constraint of~(\ref{opt:lp_inf}) only for all monomials of total degree no more than $k$ instead of for all continuous functions. That is, we impose,
 \[
 \int_{\bs X} T^\alpha(x) d\mu = \int_{\bs X} x^\alpha \,d\mu =0,\quad |\alpha | \le k,
 \]
where $T^\alpha (x) = T_1(x)^{\alpha_1}\cdot\ldots\cdot T_n(x)^{\alpha_n}$. This is a set of $k$ linear equations and since $T$ is polynomial it can be re-written in terms of the truncated moment sequence of the measure $\mu$ of degree no more than \[
d_k := k\, \mr{deg}\, T = k\max_{i=1,\ldots,n}\mr{deg}\,T_i.
\]
 In terms of the Riesz functional, this set of equalities becomes
 \begin{equation}\label{eq:eq_const}
  l_{\bs y}^{d_k}(T^\alpha(x)) = l_{\bs y}^{d_k}(x^\alpha),\quad |\alpha | \le k.
 \end{equation}
This set of equalities can be re-written in a matrix form as
\[
A_k \bs y = 0,
\]
for some matrix $A_k \in \Rb^{ \binom{n+k}{k} \times {\binom{n+d_k}{d_k}} }$
where $\bs y$ is the moment sequence of $\mu$.

The equality constraint $\int_{\bs X} 1\, d\mu $ translates to $\bs y_0 = 1$. 
The conic constraint $\bs y \in \bs M(\bs X)$ is replaced, according to the previous section, by the constraint $\bs y \in  \mathcal{M}_{d_k}(\bs X)$.
This leads to the following finite-dimensional relaxation of order $k$ of the infinite-dimensional problem~(\ref{opt:lp_inf})
\begin{equation}\label{opt:relax}
\begin{array}{lll}
 \min\limits_{\bs y \in \Rb^{\binom{n+d_k}{d_k}}} & F(\bs y) \\
 \mathrm{s.t.} &  A \bs y = 0 \vspace{0.2mm}\\
&  \bs y_0 = 1 \vspace{0.2mm}\\
&M_{d_k}(g_i \: \bs y) \succeq 0, \quad \forall\, i=0,1,\ldots,n_g.
\end{array}
\end{equation}
In optimization problem~(\ref{opt:lp_inf}), a convex function is minimized over a convex semidefinite-programming representable set and hence~(\ref{opt:lp_inf}) is a convex optimization problem. Provided that the objective functional $F(\bs y)$ is also semidefinite programming representable (e.g., it is of the form (\ref{eq:ls})), then the problem (\ref{opt:lp_inf}) is a semidefinite programming problem and hence can be readily solved by off-the-shelf software (e.g., MOSEK or SeDuMi~\cite{sedumi}). Importantly, the finite-dimensional relaxation~(\ref{opt:relax}) can be derived from the abstract form~(\ref{opt:lp_inf}) and passed to a selected SDP solver automatically with the help of the modelling software Gloptipoly 3~\cite{gloptipoly3} and Yalmip~\cite{yalmip}.

An immediate observation is that problem~(\ref{opt:relax}) is a relaxation of problem~(\ref{opt:lp_inf}) in the sense that the moment sequence of any measure feasible in~(\ref{opt:lp_inf}) truncated up to degree $d_k$ is feasible in~(\ref{opt:relax}). Therefore in particular for any $k$, the optimal value of~(\ref{opt:relax}) provides a lower bound on the optimal value of~(\ref{opt:lp_inf}). In the following section we study the convergence of these lower bounds to the optimal value of~(\ref{opt:lp_inf}) as as well as convergence of the minimizers of~(\ref{opt:relax}) to a minimizer of~(\ref{opt:lp_inf}).

\subsection{Convergence of approximations}

In this section we prove convergence of the finite-dimensional approximations~(\ref{opt:relax}) to a solution to the infinite-dimensional optimization problem~(\ref{opt:lp_inf}).

\begin{theorem}\label{thm:conv}
Suppose that Assumption~\ref{as:arch} holds, that the function $F$ is lower semi-continuous\footnote{More precisely, $F$ is assumed to be lower semi-continuous with respect to the product topology on the space of sequences $\Rb^\infty$. This is in particular satisfied if $F$ depends only on finitely many moments as, for example, in~(\ref{eq:ls}).} and let $\bs y^k \in \Rb^{\binom{n+d_k}{d_k}}$ denote an optimal solution to~(\ref{opt:relax}) and $p^\star$ the optimal value of~(\ref{opt:lp_inf}). Then the following holds:
\begin{enumerate}
\item $\lim_{k\to\infty} F(\bs y^k) = p^\star$
\item There exists a subsequence $(k_i)_{i=1}^\infty$ such that $\bs y^{k_i}$ converges pointwise to a moment sequence of an invariant measure $\mu^\star$ attaining the minimum in~(\ref{opt:lp_inf}).
\item In particular, if there is a unique invariant measure $\mu^\star$ attaining the minimum in~(\ref{opt:lp_inf}), then $\bs y^k$ converges to the moment sequence of $\mu^\star$.
\end{enumerate}
\end{theorem}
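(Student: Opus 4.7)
The plan is standard for Lasserre-hierarchy convergence proofs: combine the easy relaxation inequality $F(\bs y^k)\le p^\star$ with a compactness argument that extracts a pointwise convergent subsequence of $(\bs y^k)_k$, then pass the linear and PSD constraints to the limit to obtain a feasible point of~(\ref{opt:lp_inf}) at which $F$ attains $p^\star$. Lower semi-continuity of $F$ closes the argument, and a subsequence-of-subsequences argument lifts convergence along a subsequence to full convergence of the optimal values.

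First I would verify the relaxation inequality: for any $\mu$ feasible in~(\ref{opt:lp_inf}) and any $k$ large enough so that $d_k$ exceeds the degree on which $F$ depends, the truncation of the moment sequence of $\mu$ is feasible in~(\ref{opt:relax}) with the same value of $F$, so $F(\bs y^k)\le p^\star$. Next I would establish compactness. Assumption~\ref{as:arch} yields $\bs X \subset \{x : x^\top x \le r^2\}$, and a standard consequence of the Archimedean condition combined with $M_{d_k}(g_i\bs y^k)\succeq 0$ and $\bs y^k_0=1$ is the uniform bound $|\bs y^k_\alpha|\le r^{|\alpha|}$ for every $\alpha$ with $2|\alpha|\le d_k$ (the even-degree diagonal entries of the moment matrix are bounded via the Archimedean certificate~(\ref{eq:archCond}), and the remaining entries are then controlled by a Cauchy-Schwarz-type inequality on the PSD moment matrix). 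Padding $\bs y^k$ with zeros into $\Rb^\infty$, a Tychonoff/diagonal extraction produces a subsequence $\bs y^{k_i}$ converging pointwise to some $\bs y^\star\in\Rb^\infty$.

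To pass the constraints to the limit, fix any $d$; for $k_i$ large enough $d_{k_i}\ge d$, and $M_d(g_j\bs y^{k_i})$ is a principal submatrix of $M_{d_{k_i}}(g_j\bs y^{k_i})$, hence PSD, with each entry a fixed linear combination of a fixed set of components of $\bs y^{k_i}$. Pointwise convergence and closedness of the PSD cone give $M_d(g_j\bs y^\star)\succeq 0$ for every $d\ge 0$ and every $j=0,\ldots,n_g$, so Putinar's Theorem~\ref{thm:put} produces a nonnegative measure $\mu^\star$ on $\bs X$ with moments $\bs y^\star$. Similarly, for each fixed $\alpha$, the equality $l^{d_{k_i}}_{\bs y^{k_i}}(T^\alpha)=l^{d_{k_i}}_{\bs y^{k_i}}(x^\alpha)$ holds for all $k_i\ge|\alpha|$ and involves only finitely many components of $\bs y^{k_i}$; passing to the limit gives $\int_{\bs X}T^\alpha\,d\mu^\star = \int_{\bs X}x^\alpha\,d\mu^\star$ for every $\alpha$, and by Stone-Weierstrass this upgrades to~(\ref{eq:invar}). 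Combined with $\bs y^\star_0=1$, this shows $\bs y^\star$ is feasible in~(\ref{opt:lp_inf}), so $F(\bs y^\star)\ge p^\star$; lower semi-continuity together with $F(\bs y^{k_i})\le p^\star$ then forces $F(\bs y^\star)=p^\star$ and $F(\bs y^{k_i})\to p^\star$. This is item~2. Item~1 follows because the same extraction applied to any subsequence of $(\bs y^k)$ yields a further subsequence along which $F$ tends to $p^\star$, so $F(\bs y^k)\to p^\star$. Item~3 is immediate: under uniqueness every pointwise accumulation point of $(\bs y^k)$ equals the unique moment sequence of $\mu^\star$, and the uniform bound $|\bs y^k_\alpha|\le r^{|\alpha|}$ precludes any other accumulation behaviour, forcing the whole sequence to converge pointwise.

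The main obstacle I expect is deriving the uniform moment bound $|\bs y^k_\alpha|\le r^{|\alpha|}$ from the relaxation constraints alone, without presupposing that $\bs y^k$ is a genuine moment sequence; this is where the Archimedean certificate~(\ref{eq:archCond}) does the real work, and the degree bookkeeping (matching the degree of the certificate, the localizing polynomials $g_i$, and the relaxation order $d_{k_i}$) has to be done carefully so that the certificate is usable at all sufficiently large $k_i$. Once this uniform bound is in hand, the rest of the proof reduces to essentially mechanical limit-taking in PSD and linear constraints plus a Stone-Weierstrass step to recover the continuous-function form of invariance.
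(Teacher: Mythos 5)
Your proposal is correct and follows essentially the same route as the paper's proof: the Archimedean certificate combined with the PSD localizing-matrix constraints yields uniform bounds on the even (diagonal) moments, off-diagonal entries are controlled by positive semidefiniteness, a diagonal extraction gives a pointwise limit, Putinar's theorem and closedness of the PSD cone identify the limit as a moment sequence of an invariant measure, and lower semi-continuity of $F$ together with the relaxation inequality $F(\bs y^k)\le p^\star$ closes the argument. Your explicit subsequence-of-subsequences step for item~1 and the Stone--Weierstrass upgrade of the invariance condition are slightly more careful than the paper's write-up, but they are refinements of the same argument, not a different one.
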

\begin{proof}

The proof follows a standard argument (see, e.g., \cite{lasserreBook}). By Assumption~\ref{as:arch}, for every $\bar \alpha \ge 0$ there exists a $k_0$ such that $|\bs y_{\alpha}| \le M^{|\alpha|}$ for any vector $\bs y$ satisfying the constraints of~(\ref{opt:relax}) for $k\ge k_0$ and for any $\alpha \in \mathbb{N}^n$ satisfying $|\alpha|\le \bar \alpha$, where $M$ is the constant from~(\ref{eq:archCond}). This statement implies that each component of $\bs y^k$ is bounded for sufficiently large $k$. To see this, let $f = \sigma_0+\sum_{i}\sigma_i g_i$ be the right-hand-side polynomial from~(\ref{eq:archCond}) and let $k_0$ be the smallest number $d \ge 1$ such that $d \:\mr{deg}\, T \ge \mr{deg}\,\sigma_0/2$ and $d \:\mr{deg}\, T \ge \lfloor(\sigma_i-\mr{deg}\,g_i)/2\rfloor$. Then necessarily $\ell_{\bs y}^{d_k}(\sigma_0) \ge 0$ and $\ell_{\bs y}^{d_k}(\sigma_i g_i) \ge 0$ for any $k \ge k_0$. Therefore by linearity we have $\ell_{\bs y}^{d_k}(f) \ge 0$. But since $f = r^2-x^\top x$ by~(\ref{eq:archCond}) we get $ \ell_{\bs y}^{d_k}(r^2) \ge \ell_{\bs y}^{d_k}(x^\top x) = \sum_{i=1}^n \ell_{\bs y}^{d_k}({x_i}^2) $. Since $\ell_{\bs y}^{d_k}(r^2) = r^2\ell_{\bs y}^{d_k}(1) = r^2\bs y_0$ and $\bs y_0 = 1$ by the second constraint of~(\ref{opt:relax}) and since $\ell_{\bs y}^{d_k}({x_i}^2) \ge 0$, we conclude that $\ell_{\bs y}^{d_k}({x_i}^2) \in [0,r^2]  $. Proceeding recursively, applying the same reasoning to $x^\alpha f$ with $\alpha_i$ even for all $\alpha\in \mathbb{N}^n$ satisfying $|\alpha| \le \bar{\alpha} := \max\{\alpha \in\mathbb{N}^n \mid  |\alpha|/2 + (\mr{deg}\,\sigma_i)/2 \le \lfloor (d_k - \mr{deg}\, g)/2 \rfloor\}$, we conclude that all even moments $\bs y_{\alpha}$, $|\alpha| \le \bar{\alpha}$ lie in $[0,r^{2|\alpha|}]$. Since even moments are on the diagonal of the matrix $M_d(\bs y) \succeq 0$ and since the off-diagonal elements of a positive semidefinite matrix are bounded in magnitude by the diagonal elements, the conclusion follows.

Having established that $\limsup_{k \to\infty} |\bs y^k_\alpha| <\infty$ for each $\alpha \in \mathbb{N}^n$, it follows using a standard diagonal argument that we can extract a subsequence $\bs y^{k_i}$ satisfying, for each $\alpha\in \Nb^n$, $\lim_{i\to \infty}\bs y^{k_i}_\alpha = \bs y^\star_\alpha $ with $\bs y^\star_\alpha \in \Rb$. To conclude the proof it remains to show that $\bs y^\star$ is a moment sequence of a measure attaining the minimum in~(\ref{opt:lp_inf}). Using Theorem~\ref{thm:put}, it follows that $\bs y^\star$ is a moment sequence of a non-negative measure $\mu^\star$ on $\bs X$ since $M_d(g_i\:\bs y^\star) \succeq 0$ by continuity of the mapping $M \mapsto \lambda_{\mr{min}}(M)$, where $\lambda_{\mr{min}}(M)$ denotes the minimum eigenvalue of a symmetric matrix (or equivalently by closedness of the cone of positive semidefinite matrices). In addition, $\bs y ^\star$ satisfies the equality constraints of~(\ref{opt:lp_inf}) by continuity since each row of the matrix $A$ has only finitely many non-zero elements. Therefore $\mu^\star$ is an invariant measure. Finally, 
since (\ref{opt:relax}) is a relaxation of~(\ref{opt:lp_inf}) we have $F(\bs y^{k_i}) \le p^\star $ . By the lower semi-continuity of $F$ we also have $F(\bs y^\star) \le \lim_{i\to\infty}F(\bs y^{k_i}) \le p^\star$ and hence necessarily $F(\bs y^\star) = p^\star$ since $\mu^\star$ is feasible in~(\ref{opt:lp_inf}) and therefore  $F(\bs y^\star) \ge p^\star$.
\end{proof}

\section{Reconstruction of measure from moments}\label{sec:reconst}
In this section we show how the solutions to the finite dimensional relaxations~(\ref{opt:relax}) in the form of a truncated moment sequence can be used to approximately reconstruct the invariant measure. In particular we show how to approximate the support of the measure and how to construct a sequence of absolutely continuous measures converging weakly to the invariant measure. In this section, we assume that the optimal invariant measure is unique, which holds generically (in the Baire category sense) by~\cite[Theorem 3.2]{jenkinson2006ergodic}.

\begin{assumption}[Unique invariant measure]\label{unique}
Convex problem (\ref{opt:lp_inf}) has a unique solution denoted by $\mu^\star$.
\end{assumption}

\subsection{Approximation of the support}
In this section we show how the solutions to the finite dimensional relaxations~(\ref{opt:relax}) can be used to approximate the support of the invariant measure $\mu^\star$.  \new{The approximations constructed here aim at enclosing a certain prescribed portion of the support. Guaranteed outer approximates to the global attractor (on which certain invariant measures are supported) can be computed using the approach of~\cite{schlosser_attractor}}.

In order to construct the approximations we utilize a certain polynomial constructed for a vector of moments of a given measure. Assume that we are given the sequence of moments $\bs y$ of a non-negative measure $\mu$ on $\bs X$. Then from the truncated moments of degree up to $d$, $d$ even, we define the Christoffel polynomial
\begin{equation}\label{eq:Qdef}
q_{d}^{\bs y}(x) = v_{d/2}(x)^\top M_{d}(\bs y)^{-1} v_{d/2}(x),
\end{equation}
where $v_{d/2}(\cdot)$ is the basis vector of all monomials up to degree $d/2$ with the same ordering as the vector of moments $\bs y$.  The polynomial  $q_{d}^{\bs y}$ is well defined as long the moment matrix $M_d(\bs y)$ is invertible, which is satisfied if and only if $\mu$ is not supported on the zero level set of a polynomial of degree $d/2$ or less.

The sublevel sets of the polynomial $q_{d}^{\bs y}(x)$ have a remarkable property of approximating the shape of the support of the measure $\mu$. In the real multivariate domain, this was observed recently for empirical measures (sums of Dirac masses) in~\cite{pauwels2016nips} and subsequently studied analytically for measures with certain regularity properties in~\cite{lasserre2017empirical}. In the complex domain, the theory is far more developed; see, e.g.,~\cite{gustafsson2009bergman} and references therein.

Here we use the following simple result which holds for arbitrary probability measures $\mu$.
\begin{lemma}\label{lem:confBound}
Let $\mu$  be a probability measure on $\bs X$ with moment sequence $\bs y$, let $ \epsilon\in [0,1)$ be given and let $\gamma_\epsilon = \frac{1}{1-\epsilon}\binom{n+d/2}{n} $ and assume that $M_d(\bs y)$ is invertible. Then
\begin{equation}\label{eq:confReg}
\mu(\{x : q_d^{\bs y}(x) \le \gamma_\epsilon\}) \ge \epsilon.
\end{equation}
\end{lemma}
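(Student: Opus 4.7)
The plan is a short three-step calculation based on Markov's inequality applied to the Christoffel polynomial, using the key identity that its integral against $\mu$ equals the dimension of the polynomial basis $v_{d/2}$.

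First I would observe that $q_d^{\bs y}$ is nonnegative on $\Rb^n$: since $M_d(\bs y)$ is positive semidefinite (being the moment matrix of the nonnegative measure $\mu$) and invertible by assumption, $M_d(\bs y)^{-1}$ is positive definite, so $q_d^{\bs y}(x) = v_{d/2}(x)^\top M_d(\bs y)^{-1} v_{d/2}(x) \ge 0$ for every $x$.

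Next I would compute $\int_{\bs X} q_d^{\bs y}\, d\mu$. Because $\bs y$ is the moment sequence of $\mu$, the definition of the localizing matrix (with $g_0 \equiv 1$) gives
\begin{equation*}
M_d(\bs y) \;=\; \int_{\bs X} v_{d/2}(x)\, v_{d/2}(x)^\top\, d\mu(x).
\end{equation*}
Using the cyclic property of the trace and linearity,
\begin{equation*}
\int_{\bs X} q_d^{\bs y}(x)\, d\mu(x) \;=\; \tr\!\Bigl( M_d(\bs y)^{-1} \int_{\bs X} v_{d/2}(x)\, v_{d/2}(x)^\top\, d\mu(x)\Bigr) \;=\; \tr(I) \;=\; \binom{n+d/2}{n},
\end{equation*}
since $v_{d/2}$ has exactly $\binom{n+d/2}{n}$ entries.

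Finally I would apply Markov's inequality to the nonnegative function $q_d^{\bs y}$ under the probability measure $\mu$:
\begin{equation*}
\mu\bigl(\{x : q_d^{\bs y}(x) > \gamma_\epsilon\}\bigr) \;\le\; \frac{1}{\gamma_\epsilon}\int_{\bs X} q_d^{\bs y}\, d\mu \;=\; \frac{\binom{n+d/2}{n}}{\gamma_\epsilon} \;=\; 1-\epsilon,
\end{equation*}
by the choice of $\gamma_\epsilon$. Passing to the complement gives $\mu(\{x : q_d^{\bs y}(x) \le \gamma_\epsilon\}) \ge \epsilon$, as claimed. There is no real obstacle here; the only point worth double-checking is that the identity $\int v_{d/2} v_{d/2}^\top\, d\mu = M_d(\bs y)$ holds precisely because $\bs y$ is the genuine moment sequence of $\mu$ (not merely a sequence in the outer approximation $\bs M_d(\bs X)$), which is the hypothesis of the lemma.
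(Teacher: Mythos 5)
Your proof is correct and follows essentially the same route as the paper's: nonnegativity of $q_d^{\bs y}$, the trace identity $\int_{\bs X} q_d^{\bs y}\,d\mu = \binom{n+d/2}{n}$, and Markov's inequality applied to the complementary event. The paper writes out the Markov step by hand rather than citing it by name, but the argument is identical.
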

\begin{proof}
We bound the complementary event:
\[
\mu(\{x : q_d^{\bs y}(x) > \gamma_\epsilon\})  = \int_{\{x : q_d^{\bs y}(x)  > \gamma_\epsilon  \}} 1\, d\mu = \int_{\{x : \gamma_\epsilon^{-1}q_d^{\bs y}(x)  > 1  \}} 1\, d\mu \le \gamma_\epsilon^{-1}\int_{\Xf}\, q_d^{\bs y}(x)\, d\mu(x),
\]
where we have used the fact that $q_d^{\bs y}$ is nonnegative. Using the definition of $q_d^{\bs y}$ we get
\begin{align*}
\int_{\Xf}\, q_d^{\bs y}(x)\, d\mu(x) &= \int_{\Xf} v_{d/2}(x)^\top M_d(\bs y)^{-1}v_{d/2}(x) \, d\mu(x) = \mathop{\mr{trace}}\Big\{M_d(\bs y)^{-1}  \int_{\Xf} v_{d/2}(x)v_{d/2}(x)^\top \, d\mu(x) \Big\} \\
& = \mathop{\mr{trace}}\{ M_d(\bs y)^{-1}  M_d(\bs y)\} = \mathop{\mr{trace}}\Big\{ \Ind_{\binom{n+d/2}{n}}\Big\} = \binom{n+d/2}{n},
\end{align*}
where $\Ind_k$ denotes the identity matrix of size $k$. Therefore
\[
\mu (\{x : q_d^{\bs y}(x) > \gamma_\epsilon \big\} \le \frac{1}{\gamma_\epsilon}\binom{n+d/2}{n}
\]
and the result follows since $\mu(\{x : q_d^{\bs y}(x) \le \gamma_\epsilon\})$ = $1- \mu(\{x : q_d^{\bs y}(x) > \gamma_\epsilon\})$.
\end{proof}

Lemma~\ref{lem:confBound} can be readily used to construct approximations to the support of $\mu$ in the form
\[
 \big\{x : q_d^{\bs y^k}(x) \le \gamma_\epsilon \big\}
\]
where  $\bs y^k \in \Rb^{\binom{n+d_k}{d_k}}$ is a solution to the $k$th order relaxation~(\ref{opt:relax}). We remark that $\bs y^k$ only approximates the moments of $\mu^\star$, with guaranteed convergence by Theorem~\ref{thm:conv}, and hence the bound from Lemma~\ref{lem:confBound} may be violated for finite $k$.

\subsection{Weakly converging approximations}
In this section we show how the to construct a sequence of absolutely continuous measures (w.r.t. the Lebesgue measure) converging weakly to the invariant measure. This is especially useful if in fact the invariant measure possesses a density with respect to the Lebesgue measure although the approach is general and always provides a sequence of signed measures with polynomial densities that converges weakly to the invariant measure. The idea is simple: given a vector $\bs y\in \Rb^{\binom{n+d}{n}}$ (e.g., a truncated moment vector of a measure), we can always represent the Riesz functional $\ell_{\bs y}^d:\Rb[x]_d \to\Rb$ as
\begin{equation}\label{eq:rho}
\ell_{\bs y}^d(p) = \int_{\bs X} p(x) q(x)dx
\end{equation}
for some polynomial $q \in \Rb[x]_d$, provided that the set $\Xf$ has a nonempty interior. Indeed, by linearity it suffices to satisfy~(\ref{eq:rho}) for $p(x) = x^\alpha$, $|\alpha| \le d$, which leads to a system of linear equations
\begin{equation}\label{eq:rho2}
 M^L_d \bs q =\bs y,
\end{equation}
where $\bs q$ is the coefficient vector of polynomial $q$ in the monomial basis with the same ordering as the vector of moments $\bs y$, and $M^L_d$ is the moment matrix of the Lebesgue measure on $\bs X$ of degree $2d$, i.e.,
\[
M^L_d := \int_{\bs X} v_d(x) v_d(x)^\top \, dx.
\]
Provided that the interior of $\Xf$ is nonempty, matrix $M^L_d$ is invertible and hence the linear system of equations~(\ref{eq:rho2}) has a unique solution $\bs q = (M^L_d)^{-1}\bs y$.
This approach applied to the solutions of~(\ref{opt:relax}) leads to the following result:
\begin{theorem}\label{thm:weakDense}
Let Assumption \ref{unique} hold, suppose that the interior of $\bs X$ is nonempty, denote $\bs y^k \in \Rb^{\binom{n+d_k}{n}}$ any solution to~(\ref{opt:relax}), and let
\begin{equation}\label{eq:crho}
\bs q^k := (M^L_{d_k})^{-1} \bs  y^k.
\end{equation}
Then the signed measures with densities $q_k(x) := v^\top_k(x) \bs q^k \in \Rb[x]_k$ with respect to the Lebesgue measure, converge weakly star on $\bs X$ to the invariant measure $\mu^\star$.
\end{theorem}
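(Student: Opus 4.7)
The plan is to reduce weak-$\ast$ convergence of $\nu_k := q_k\,dx$ to convergence of polynomial moments using density of polynomials in $C(\bs X)$ (Stone--Weierstrass on the compact set $\bs X$), and then invoke Theorem~\ref{thm:conv}. Concretely, I would split the work into two ingredients: (i) $\int_{\bs X} p\,d\nu_k \to \int_{\bs X} p\,d\mu^\star$ for every polynomial $p$, and (ii) a uniform bound $\sup_k \|\nu_k\|_{TV}<\infty$ needed to upgrade polynomial convergence to convergence against an arbitrary continuous test function.

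The first ingredient is essentially immediate from the definition of $\bs q^k$ in (\ref{eq:crho}). Because $M^L_{d_k}\bs q^k = \bs y^k$, for any polynomial $p = \sum_{|\alpha|\le d_k}\bs p_\alpha x^\alpha$ of degree at most $d_k$ one has
\[
\int_{\bs X} p\,d\nu_k \;=\; \int_{\bs X} p(x)\,q_k(x)\,dx \;=\; \bs p^\top M^L_{d_k}\bs q^k \;=\; \bs p^\top \bs y^k \;=\; \ell^{d_k}_{\bs y^k}(p),
\]
so the moments of $\nu_k$ up to degree $d_k$ are precisely the components of $\bs y^k$. Fixing any polynomial $p$ of degree $d$, for all $k$ large enough that $d_k\ge d$ the right-hand side equals $\sum_{|\alpha|\le d}\bs p_\alpha \bs y^k_\alpha$; by Assumption~\ref{unique} and Theorem~\ref{thm:conv} the whole sequence $\bs y^k$ converges componentwise to the moment sequence $\bs y^\star$ of $\mu^\star$, so the limit is $\int_{\bs X} p\,d\mu^\star$.

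The main obstacle is the second ingredient. Granted a bound $\|\nu_k\|_{TV}\le C$, the extension from polynomials to general $f\in C(\bs X)$ is a standard approximation argument: for $\epsilon>0$ Stone--Weierstrass yields a polynomial $p$ with $\|f-p\|_\infty<\epsilon$, and then
\[
\Big|\int f\,d\nu_k-\int f\,d\mu^\star\Big|\;\le\;\epsilon(C+1)\;+\;\Big|\int p\,d\nu_k-\int p\,d\mu^\star\Big|,
\]
which tends to $0$ upon letting $k\to\infty$ and then $\epsilon\downarrow 0$. To produce the TV bound itself, my first attempt would be $\|q_k\|_{L^1(\bs X,dx)}\le |\bs X|^{1/2}\|q_k\|_{L^2(\bs X,dx)}$ combined with the identity $\|q_k\|_{L^2}^2 = \bs q^{k\top} M^L_{d_k}\bs q^k = (\bs y^k)^\top \bs q^k$, which is straightforward to control from the convergence of $\bs y^k$ whenever $\mu^\star$ admits an $L^2$ density, since then $q_k$ is asymptotically the $L^2$-orthogonal projection of that density onto $\Rb[x]_{d_k}$ and $\|q_k\|_{L^2}\to\|d\mu^\star/dx\|_{L^2}$. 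In the general (possibly singular) case, bounding $\|q_k\|_{L^2}$ uniformly in $k$ is the genuinely delicate step and is where I expect the bulk of the technical work to lie; it seems to require a careful analysis of the inverse Lebesgue Gram matrix $(M^L_{d_k})^{-1}$ together with the specific structure imposed on $\bs y^k$ by the constraints of (\ref{opt:relax}).
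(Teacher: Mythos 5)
Your ingredient (i) is, essentially verbatim, the paper's \emph{entire} proof: the paper observes that $\int_{\Xf} x^\alpha q_k\,dx = \bs y^k_\alpha$ for $k\ge|\alpha|$, invokes part 3 of Theorem~\ref{thm:conv}, and then simply asserts that checking monomials suffices because polynomials are dense in $C(\Xf)$. Your ingredient (ii) --- the uniform total-variation bound $\sup_k\|q_k\|_{L^1(\Xf)}<\infty$ --- is not addressed in the paper at all. You are right that it cannot be dispensed with: for \emph{signed} measures, convergence of all moments does not imply weak-star convergence. On $\Xf=[-1,1]$, the measures $\nu_k=\mu^\star+k^2P_k\,dx$ (with $P_k$ the $k$-th Legendre polynomial) have the same limiting moments as $\mu^\star$ but unbounded total variation, and by the uniform boundedness principle they fail to converge weakly star. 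So the density-of-polynomials step, as used in the paper, does not by itself close the proof; your decomposition into (i) plus (ii) is the correct one.

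The remaining question is whether the bound in (ii) can be supplied, and here your suspicion that it is ``genuinely delicate'' understates the problem: in the stated generality it is unavailable. If $\mu^\star=\delta_{x_0}$ (an atomic invariant measure, exactly the kind targeted in Section~\ref{sec:applications}) and $\bs y^k$ were the exact truncated moments, then $q_k$ is the section of the Christoffel--Darboux kernel of the Lebesgue measure, $q_k(x)=\sum_{j\le d_k}P_j(x_0)P_j(x)/\|P_j\|^2_{L^2}$, so that $\int f q_k\,dx$ is the $d_k$-th Legendre partial sum of $f$ at $x_0$ and $\|q_k\|_{L^1}$ grows like the associated Lebesgue constant; there exist continuous $f$ for which these partial sums do not converge to $f(x_0)$. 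Thus your $L^2$-density argument goes through exactly as you sketch it when $\mu^\star$ has an $L^2$ density, while in the singular case the missing step is not just hard but cannot hold. In summary: your proposal reproduces the paper's argument where the paper has one, and the gap you flag is a genuine gap that the paper's own proof leaves open; closing it requires either an added regularity hypothesis on $\mu^\star$ or a weakening of the conclusion to convergence tested against polynomials only.
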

\begin{proof}
Verifying weak star convergence means that $\lim_{k\to\infty} \int_{\Xf} f q_k\,dx  = \int_{\Xf} f \, d\mu^\star$ for all $f \in C(\Xf)$. Since $\Xf$ is compact, it is enough to verify this relationship for all $f$ of the form $f = x^\alpha$, $\alpha \in \Nb^n$, which forms a basis of the space of all polynomials, which is a dense subspace of $C(\Xf)$. By construction we have $\int_{\bs X} x^\alpha q_k = \bs y_\alpha^k$ for $k\ge |\alpha|$ and hence
\[
\lim_{k\to\infty} \int_{\Xf} x^\alpha q_k(x)\,dx = \lim_{k\to\infty} \bs y^k_\alpha = \bs y_\alpha^\star = \int_{\Xf} x^\alpha \, d\mu^\star(x)
\]
by Theorem~\ref{thm:conv}, part 3.
\end{proof}

Theorem~\ref{thm:weakDense} says that the density approximations constructed from the solutions to~(\ref{opt:relax}) using~(\ref{eq:crho}) converge in the weak star topology to the invariant measure optimal in~(\ref{opt:lp_inf}). From a practical point of view, e.g., for the purpose of visualization, we recommend using polynomial densities with coefficients
\begin{equation}\label{eq:rhobar}
(M^L_k)^{-1} \bar{\bs   y}^k,
\end{equation}
where $\bar{\bs   y}^k \in \Rb^{\binom{n+k}{n}}$ is the vector of the first $\binom{n+k}{n}$ elements of $\bs y^k$, rather than the full vector $\bs y^k \in \Rb^{\binom{n+d_k}{n}}$. This is because the invariance constraint~(\ref{eq:eq_const}) is imposed only for all monomials up to degree $k$ and hence moments of degrees higher than $k$ are less constrained in~(\ref{opt:relax}) and hence are likely to be less accurate approximations to the true moments.

\section{Applications - choosing the objective function}\label{sec:applications}
In this section we list a several classes of measures that can be targeted through the choice of the objective function $F$ of convex problem~(\ref{opt:lp_inf}).

\subsection{Physical measures}\label{sec:physMeas}
Here we describe how the proposed methodology can be used to compute the moments of physical measures. Let Assumption \ref{unique} hold so that there is a unique physical measure $\mu$ with support included in $\Xf$. Therefore for Lebesgue almost every $x \in \Xf$ for which the trajectory of~(\ref{eq:sys}) originating from $x$ stays in $\Xf$ we have for any $f \in C(\Xf)$
\[
\int_{\Xf} f\, d\mu = \lim_{N\to\infty} \frac{1}{N}\sum_{i=1}^N f(T^i(x)).
\]
Selecting $f(x) = x^\alpha$, we can approximately compute the moments of $\mu$ as
\begin{equation}\label{eq:mom_num}
\bs y_\alpha^{\mr{num}} =   \frac{1}{N}\sum_{i=1}^N f(T^i(x)) \approx \int_{\Xf} x^\alpha\, d\mu
\end{equation}
with some $N \gg 1$ and $\alpha$ running over a selected subset of multiindices $\mathcal{I}$. The idea is that the number of moments we (inaccurately) compute using~(\ref{eq:mom_num}) is very small and then we use the optimization problem~(\ref{opt:relax}) to compute a much larger number of moments of $\mu$, just from the information contained in $\{\bs y_\alpha^{\mr{num}} : \alpha \in \mathcal{I}\}$. This is achieved by setting the objective function in~(\ref{opt:relax}) to 
\begin{equation}\label{eq:obj_num}
F(\bs y) = \sum_{\alpha \in \mathcal{I}} (\bs y_\alpha - \bs y_\alpha^{\mr{num}})^2
\end{equation}
or any other metric measuring the discrepancy among moments.

\subsection{Ergodic measures}\label{sec:ergodMeas}
In this section we describe how one can target ergodic measures through the choice of the objective function $F$. In particular, these can be use used to locate embedded unstable fixed points or periodic orbits. The starting point is the well-known fact that ergodic measures are precisely the extreme points of the set of all invariant probability measures (see, e.g., \cite[Proposition 12.4]{phelps2001lectures}; see also~\cite{cross1998approximating} for  general results on the behavior of extreme points under projection in infinite-dimensional vector spaces). Therefore, we can target ergodic measures by selecting a \emph{linear} objective functional in~(\ref{opt:lp_inf}) since the minimum of a linear program is attained at an extreme point, provided the minimizer is unique. This leads to the following immediate result:
\begin{theorem}
Let $F$ be a continuous linear functional such that Assumption \ref{unique} holds and let $\bs y^k \in \Rb^{\binom{n+d_k}{d_k}}$ denote the optimal solution to~(\ref{opt:relax}). Then $\bs y^k$ converges to a moment sequence of an ergodic measure $\mu$, provided Assumption~\ref{as:arch} holds.
\end{theorem}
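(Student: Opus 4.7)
The plan is to combine three ingredients already developed or cited in the paper: the convergence result of Theorem~\ref{thm:conv}, the variational characterization of extreme points of a convex set via linear functionals, and the classical identification of ergodic measures with the extreme points of the set of invariant probability measures.

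First, I would invoke Theorem~\ref{thm:conv}. Since $F$ is a continuous linear functional it is in particular lower semi-continuous, and by Assumption~\ref{unique} the infinite-dimensional problem~(\ref{opt:lp_inf}) has a unique minimizer $\mu^\star$. Part~3 of Theorem~\ref{thm:conv} then gives, provided Assumption~\ref{as:arch} holds, that $\bs y^k$ converges (componentwise) to the moment sequence of this unique minimizer $\mu^\star$. Thus the only remaining task is to certify that $\mu^\star$ is ergodic.

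Second, I would show that $\mu^\star$ is an extreme point of the convex set $\mathcal{P}_{\mathrm{inv}}(\bs X)$ of invariant probability measures supported on $\bs X$. Suppose for contradiction that $\mu^\star = \lambda \mu_1 + (1-\lambda) \mu_2$ with $\lambda \in (0,1)$ and $\mu_1,\mu_2 \in \mathcal{P}_{\mathrm{inv}}(\bs X)$, $\mu_1 \neq \mu_2$. Then on the level of moment sequences $\bs y^\star = \lambda \bs y^{(1)} + (1-\lambda)\bs y^{(2)}$, both $\bs y^{(1)}$ and $\bs y^{(2)}$ are feasible in~(\ref{opt:lp_inf}), and by linearity of $F$ we obtain $F(\bs y^\star) = \lambda F(\bs y^{(1)}) + (1-\lambda) F(\bs y^{(2)})$. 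Since $\bs y^\star$ is the minimizer, both $\bs y^{(1)}$ and $\bs y^{(2)}$ must also attain the minimum, contradicting the uniqueness guaranteed by Assumption~\ref{unique}. Therefore $\mu^\star$ is an extreme point of $\mathcal{P}_{\mathrm{inv}}(\bs X)$.

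Third, I would close the argument by citing the standard fact (used already in the discussion preceding the theorem, cf.~\cite[Proposition 12.4]{phelps2001lectures}) that the extreme points of $\mathcal{P}_{\mathrm{inv}}(\bs X)$ are precisely the ergodic invariant probability measures. Hence $\mu^\star$ is ergodic and $\bs y^k$ converges to its moment sequence, as claimed. The main subtlety to watch for is really in the first step, namely that Theorem~\ref{thm:conv} is applicable; once uniqueness of the minimizer is combined with linearity of $F$, the extremality/ergodicity conclusion is a short and essentially bookkeeping argument.
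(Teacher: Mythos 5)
Your proposal is correct and follows essentially the same route as the paper: invoke Theorem~\ref{thm:conv} (part 3) for convergence to the unique minimizer, observe that a unique minimizer of a linear functional over a convex set must be an extreme point, and identify extreme points of the set of invariant probability measures with ergodic measures via \cite[Proposition 12.4]{phelps2001lectures}. The only difference is that you spell out the extremality argument that the paper leaves as a one-line remark.
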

\begin{proof}
The result follows from Theorem~\ref{thm:conv} and the fact that the minimizer is unique, therefore necessarily an extreme point of the feasible set of~(\ref{opt:lp_inf}).
\end{proof}

A typical choice of the objective functional is $F(\bs y) = \sum_\alpha c_\alpha \bs y_\alpha  $ with $c$ having only finitely many non-zero elements (i.e., $F$ is a linear combination of a finite number of moments).

\subsection{Absolutely continuous measures}
In this section we describe how to target measures $\mu$ absolutely continuous w.r.t. a given measure $\nu$ through the choice of the objective function $F$. In most practical applications the measure $\nu$ will be the Lebesgue measure. The absolutely continuity of $\mu$ w.r.t. $\nu$ is equivalent to the existence of a density $\rho$ such that $\mu = \rho d\nu$. Assuming that $\rho(x) \le \gamma < \infty$ for $\nu$-almost all $x \in X$, we can impose the absolute continuity constraint by choosing
\[
F(\mu) = \mathbb{I}_{\{ \mu \,\mid\, \mu \le \gamma \nu\}},
\]
where $\mathbb{I}_A$ is the extended-value characteristic of a set $A$ (i.e., $\mathbb{I}_A(x) = 0$ if $x \in A$ and $\mathbb{I}_A = +\infty$ if $x\notin A$). Then $F(\bs y)$ is a convex function of $\bs y$ and its domain is the set of moment vectors $\bs y$ such that
\begin{equation}\label{eq:absCont}
M_d(\gamma\bs z - \bs y)\succeq 0 \quad \forall d\in\Nb,
\end{equation}
where $\bs z$ denotes the moment sequence of $\nu$. This is added to the constraints of~(\ref{opt:lp_inf}) and (\ref{opt:relax}) (for a particular fixed $d$ in the latter case).

We remark that, in general, there may be multiple absolutely continuous measures in which case the constraint~(\ref{eq:absCont}) can be combined with an additional choice of the objective functional $F$ in order to target a specific measure (e.g., the physical measure as in Section~\ref{sec:physMeas}).

\subsection{Singular measures}
In this section we describe how to target singular measures through the choice of the objective function $F$. In this work, we focus on atomic measures only. Whether there exists and effective variational characterization of the elusive singular-continuous measures (e.g., the Cantor measure) remains an open problem, to the best of our knowledge.

Atomic measures are of interested because they may correspond to unstable periodic orbits or fixed points, which are difficult to obtain using simulation-based techniques. These structures are typically ergodic and therefore the methods of this section can be combined with those of Section~\ref{sec:ergodMeas}.

We use the observation that if a measure $\mu$ consists of $K$ atoms, then the associated moment matrix $M_d(\bs y)$ is of rank at most $K$ for all $d$ and of rank exactly $K$ for sufficiently large $d$. Therefore, in order to seek an invariant measure consisting of $K$ atoms (e.g., a $K$-period orbit) one would in principle want to choose
\[
F(\bs y) = \mathbb{I}_{\{ \bs y \,\mid\, \mr{rank}\,M_d(\bs y) = K\}}.
\]
Unfortunately, such $F$ is not convex and therefore we propose to use a convex relaxation
\[
F(\bs y) = \mathbb{I}_{\{ \bs y \,\mid\, \mr{trace}\,M_d(\bs y) \le \gamma\}},
\]
where $\gamma \ge 0$ is a regularization parameter.  This translates to the constraint
\[
\mr{trace}\,M_d(\bs y) \le \gamma
\]
which is added to the constraints of~(\ref{opt:lp_inf}) and (\ref{opt:relax}).
We note that  since $M_d(\bs y)$ is positive semidefinite, its trace coincides with its nuclear norm, which is a standard proxy for rank minimization \cite{fazel2002} (since the convex hull of the rank is the trace on the unit ball of symmetric matrices).
 
\section{Continuous time version}\label{sec:contDet}
In this section we briefly outline how the presented approach extends to continuous time. Assume therefore that we are dealing with the dynamical system of the form
\begin{equation}\label{eq:sys_cont}
\dot{x} = b(x),
\end{equation}
each component of the vector field $b$ is assumed to be a multivariate polynomial. We are seeking a non-negative measure $\mu$ on $\bs X$ invariant under the flow of dynamical system~(\ref{eq:sys_cont}), which is equivalent to the condition
\begin{equation}\label{eq:invarCont}
\int_{\Xf} \mr{grad} f \cdot b\, d\mu = 0
\end{equation}
for all $f \in C^1(\Xf)$.

The infinite-dimensional convex optimization problem~(\ref{opt:lp_inf}) then becomes
\begin{equation}\label{opt:lp_inf_cont}
\begin{array}{lll}
 \min\limits_{\mu \in \mathcal{M}_+(\mathbf{X})} & F(\mu) \\
 \mathrm{s.t.} & \int_{\Xf} \mr{grad} f \cdot b\, d\mu = 0 \quad \forall f\in C(\mathbf{X}) \vspace{1mm}\\
&  \int_\mathbf{X} d\mu = 1.\\
\end{array}
\end{equation}
This optimization problem is then approximated by taking $f = x^\alpha$, $\alpha \in\mathbb{N}^n$, and proceeding in exactly the same way as described in Section~\ref{sec:approx}, leading to a finite-dimensional relaxation of the same form as~(\ref{opt:relax}), with the same convergence results of Theorems~\ref{thm:conv} and \ref{thm:weakDense}.

\section{Markov processes}\label{sec:markov}
In this section we describe a generalization to Markov processes evolving on the state-space $\bf X$. We assume a Markov chain in the state-space form\footnote{For a relation of this form of a Markov process to the one specified by the transition kernel, see, e.g.,~\cite{HernandezLasserre}.}
\[
x_{k+1} = T(x_k,w_k),
\]
where $(w_k)_{k=0}^\infty$ is a sequence of independent identically distributed random variables
with values in a given set $\bs W$ and the mapping $T$ is assumed to be a polynomial in $(x,w)$. The distribution of the random variables $w_k$ is denoted by $P_w$, i.e., for all Borel $\bf A\subset \bf X$, $P_w(\bf A)$ is the probability that $w_k \in \bf A$.

The condition for a probability measure $\mu$ to be invariant then reads
\begin{equation}\label{eq:invar_markov}
\int_{\bf X}\int_{\bs W} f (T(x,w))\, dP_w(w)  \,d\mu(x) = \int_{\mathbf{X}} f(x) \,d\mu(x)
\end{equation}
for all $f \in C(\mathbf{X})$. This equation is \emph{linear} in $\mu$ and can therefore be used in~(\ref{opt:lp_inf}) instead of the first equality constraint. The approach then proceeds along the steps of Section~\ref{sec:approx}, i.e., we set $f(x) = x^\alpha$ and enforce (\ref{eq:invar_markov}) for all such $f$ with $|\alpha|\le k$, leading to
 \begin{equation}\label{eq:markTrunc}
 \int_{\bs X} \int_{\bs W} T^\alpha(x,w)\,dP_w(w)\,d\mu(x)  =  \int_{\bs X}x^\alpha \,d\mu ,\quad |\alpha | \le k.
  \end{equation}
Since $T$ is a polynomial in $(x,w)$, $T^\alpha$ can be written as
\[
T^\alpha(x,w) = \sum_{\beta,\gamma} t_{\alpha,\beta,\gamma}\, x^\beta w^\gamma
\]
for some coefficients $t_{\alpha,\beta,\gamma}$. Therefore,
\[
\int_{\bs W} T^\alpha(x,w)dP_w(w) = \sum_{\beta,\gamma} t_{\alpha,\beta,\gamma}\, x^\beta \int_{\bs W} w^\gamma\,dP_w(w) = \sum_{\beta,\gamma} t_{\alpha,\beta,\gamma}\, x^\beta  m_{\gamma},
\]
where
\[
m_{\gamma} = \int_{\bs W} w^\gamma\,dP_w(w) 
\]
are the moments of $w_k$, which are fixed numbers that can be either precomputed analytically or using sampling  techniques. The equation~(\ref{eq:markTrunc}) can therefore be re-written as
\[
\sum_{\beta,\gamma} t_{\alpha,\beta,\gamma} m_\gamma  \int_{\bs X} x^\beta d\mu(x)  = \int_{\bs X} x^\alpha\,d\mu(x),d\mu ,\quad |\alpha | \le k,
\]
or, in terms of the moments of $\mu$~(\ref{eq:mom_def}),
\[
\sum_{\beta,\gamma} t_{\alpha,\beta,\gamma} m_\gamma  \bs y_\beta  = \bs y_\alpha, \quad |\alpha | \le k.
\]
This is a finite-dimensional system of linear equations of the form
\[
A\bs y = 0.
\]
Adding the normalization constraint $\bs y_0 = 1$ and the positive-semidefiniteness constraints $M_{d_k}(\bs y)\succeq 0$ and $M_{d_k}(\bs y,g_i)\succeq 0$ leads to the optimization problem~(\ref{opt:relax}). The objective functional $F(\bs y)$ of~(\ref{opt:relax}) is again chosen in order to target a particular class of invariant measures. The same convergence guarantees of Theorems~\ref{thm:conv} and \ref{thm:weakDense} hold.

\begin{remark}[Uniqueness]
It is interesting to note that, in the presence of randomness, it is much more common for a \emph{unique} invariant measure to exist. For example, a sufficient condition for this is the recurrence of the Markov chain; see~\cite[Chapter 10]{meyn2012markov} for more details.
\end{remark}

\subsection{Continuous-time Markov processes}\label{sec:contStoch}
The extension to continuous-time stochastic processes is straightforward. The invariance condition~(\ref{eq:invarCont}) is simply replaced by
\begin{equation}\label{eq:invMarkCont}
\int_{\bf X} \mathcal{A}f\,d\mu = 0,
\end{equation}
where $\mathcal{A}$ is the infinite-dimensional generator of the process. For concreteness, let us consider the stochastic differential equation
\[
dX_t = b(X_t)dt + \sigma(X_t) dW_t,
\]
where $b:\Rb^n\to\Rb^n$ is the drift, $\sigma :\Rb^n \to \Rb^{n\times k}$ the diffusion matrix
 and $W_t$ is a vector-valued Wiener process. Then we have
\[
\mathcal{A}f = \sum_i b_i \frac{\partial f}{\partial x_i} + \frac{1}{2}\sum_{i,j}[\sigma\sigma^\top]_{i,j} \frac{\partial^2 f}{\partial x_i\partial x_j},
\]
which is a polynomial provided that $b(x)$ and $\sigma(x)$ are polynomial in $x$ and we set $f(x) = x^\alpha$. The invariance condition~(\ref{eq:invMarkCont}) can therefore be expressed solely in terms of the moments of $\mu$~(\ref{eq:mom_def}). The approach then proceeds in exactly the same fashion as described in Section~\ref{sec:approx}, leading to a finite-dimensional relaxation of the same form as~(\ref{opt:relax}).
\new{\paragraph{Certifying non-existence of an invariant measure} Contrary to the deterministic case, stochastic processes driven by a Wiener process typically evolve on non-compact domains, thereby rendering the question of the \emph{existence} of an invariant measure much more subtle. For example, the Wiener process itself does not admit an invariant measure but the Ornstein-Uhlenbeck process $dx(t) = -ax(t)dt + \sigma dW(t)$ does for any $a > 0$. Interestingly, the proposed approach provides a means to numerically certify that no invariant measure exists. Indeed, since the feasible set of the SDP relaxation~(\ref{opt:relax}) contains the truncated moment sequences of all invariant measures, proving the emptiness of this feasible set (which is a finite-dimensional spectrahedron) implies the \emph{non-existence} of an invariant measure. Indication of such infeasibility is detected during the solution of SDP~(\ref{opt:relax}) by most existing solvers, although its rigorous certification is more involved~\cite{LMIexact}. The proposed method thereby complements methods based on sum-of-squares programming that can be used to prove the existence of an invariant measure using Foster-Lyapunov conditions~\cite{meyn2012markov}.}

%

\section{Eigenmeasures of Perron-Frobenius}\label{sec:pf}
In this section we briefly describe how the presented approach can be extended to computation of the eigenmeasures of the Perron-Frobenius operator~\cite{mezic2004comparison}. For concreteness we work with the discrete-time dynamics~(\ref{eq:sys}), although analogous results can be obtained for continuous time, following the developments of Section~\ref{sec:contDet}. The Perron-Frobenius operator $\mathcal{P} : \bs M_c(\bs X)\to \bs M_c(\bs X)$ is defined by
\[
(\mathcal{P}\mu)(A) = \mu(T^{-1}(A))
\]
for every Borel measurable set $A\subset \bs X$. Here $M_c(\bs X)$ stands for the vector space of all  complex-valued measures on $\bs X$. Given $\lambda \in \mathbb{C}$, a complex-valued measure $\mu$ is an eigenmeasure of $\mathcal{P}$ if 
\begin{equation}\label{eq:eigmeas}
\int f\circ T\,d\mu = \lambda \int f\,d\mu
\end{equation}
for all $f\in C(\bs X)$. Since the moment-based approach developed previously applies to nonnegative measures, we use the Jordan decomposition
\[
\mu = \mu_R^+ - \mu_R^{-} +i(\mu_I^+ - \mu_I^{-}),
\]
where $i = \sqrt{-1}$ is the imaginary unit and $\mu_R^+ \in \bs M(\bs X)$, $\mu_R^- \in \bs M(\bs X)$, $\mu_I^+ \in \bs M(\bs X)$, $\mu_I^- \in \bs M(\bs X)$. Similarly, we write
\[
\lambda = \lambda_R + i\lambda_I.
\]
Then the condition~(\ref{eq:eigmeas}) is equivalent to
\begin{subequations}
\begin{align}
&\int (f\circ T - \lambda_R f)\,d\mu_R^+ -\int (f\circ T - \lambda_R f)\,d\mu_R^-  + \lambda_I\int f\,d\mu_I^- -  \lambda_I\int f\,d\mu_I^+ = 0, \\
&\int (f\circ T - \lambda_R f)\,d\mu_I^+ -\int (f\circ T - \lambda_R f)\,d\mu_I^-  + \lambda_I\int f\,d\mu_R^- -  \lambda_I\int f\,d\mu_R^+ = 0
\end{align}
\end{subequations}
for all $f\in C(\bs X)$. When expressed with $f = x^\alpha$, $\alpha \in \mathbb{N}^n$, this is equivalent to 
\[
A(\bs y_R^+,\bs y_R^-,\bs y_I^+,\bs y_I^-) = 0,
\]
where $\bs y_R^+$ etc are the moment sequences of the respective measures and $A$ is a linear operator. Coupled with the normalization constraint
\[
(\bs y_R^+)_0 + (\bs y_R^-)_0 + (\bs y_I^+)_0 + (\bs y_I^-)_0 = 1
\]
and and objective functional $F(\bs y_R^+,\bs y_R^-,\bs y_I^+,\bs y_I^-)$, we arrive at an infinite dimensional linear programming problem
\begin{equation}\label{opt:lp_inf_PF}
\begin{array}{lll}
 \min\limits_{\bs y_R^+,\bs y_R^-,\bs y_I^+,\bs y_I^-} & F(\bs y_R^+,\bs y_R^-,\bs y_I^+,\bs y_I^-) \\
 \mathrm{s.t.} & A(\bs y_R^+,\bs y_R^-,\bs y_I^+,\bs y_I^-) = 0\\
&  (\bs y_R^+)_0 + (\bs y_R^-)_0 + (\bs y_I^+)_0 + (\bs y_I^-)_0 = 1\\
& y_R^+ \in \bs M(\bs X),\;  y_R^- \in \bs M(\bs X),\;  y_I^+ \in \bs M(\bs X),\; y_I^- \in \bs M(\bs X),
\end{array}
\end{equation}
which is then approximated by a sequence of finite-dimensional SDPs in exactly the same fashion as described in Section~\ref{sec:approx}, with the convergence results of Theorem~\ref{thm:conv} also holding in this setting.

\section{Numerical examples}\label{sec:numEx}
\subsection{Logistic map}
As our first example we consider the Logistic map:
\[
x^+ = 2x^2 - 1
\]
on the set $\Xf = [-1,1] = \{x \in \Rb : (x+1)(1-x)\ge 0 \}$. 

\subsubsection{Physical measure}
First, we compute the moments of the unique physical measure $\mu$ on $\Xf$. For this example, the density of the physical measure is given by \cite{LasotaMackey1994}
\[
\rho(x) = \frac{1}{\pi}\frac{1}{\sqrt{1-x^2}}.
\]
We used the first moment $\frac{1}{\pi}\int_{-1}^1 \frac{x}{\sqrt{1-x^2}}\, dx = 0$ as data input for the objective function of the form (\ref{eq:obj_num}), i.e., we set $F(\bs y) = (\bs y_1 - 0)^2$ (note that  instead of an exactly computed value we could have used an imprecise value of the moment from a simulation or determine this value based on symmetry without analytically integrating the density). Then we solve~(\ref{opt:relax}) with $k \in\{5,10,100\}$ and compute a degree $k$ polynomial approximation to the density using~(\ref{eq:rhobar}); as in~\cite{henrion2012kybernetika}, for numerical stability reasons, we work in the Chebyshev basis rather than the monomial basis (i.e., we express the constraint~(\ref{eq:eq_const}) using the Chebyshev basis polynomials instead of monomials $x^\alpha$ and replace the monomial basis vectors $v_{d_i}$ by the vectors of Chebyshev polynomials up to degree $d_i$ in~(\ref{eq:locMat})); see~\cite{henrion2012kybernetika} for more details on the use of Chebyshev polynomials in this context. In Figure~\ref{fig:logistic} we compare the true density and the polynomial approximations. We observe a very good fit even for low-degree approximations and oscillations of the sign of the approximation error, akin to classical results from approximation theory. The computed moments (transformed to the monomial basis) are compared in Table~\ref{tab:log}; we see a very good match.

\begin{figure*}[th]
\begin{picture}(140,130)
\put(-10,0){\includegraphics[width=60mm]{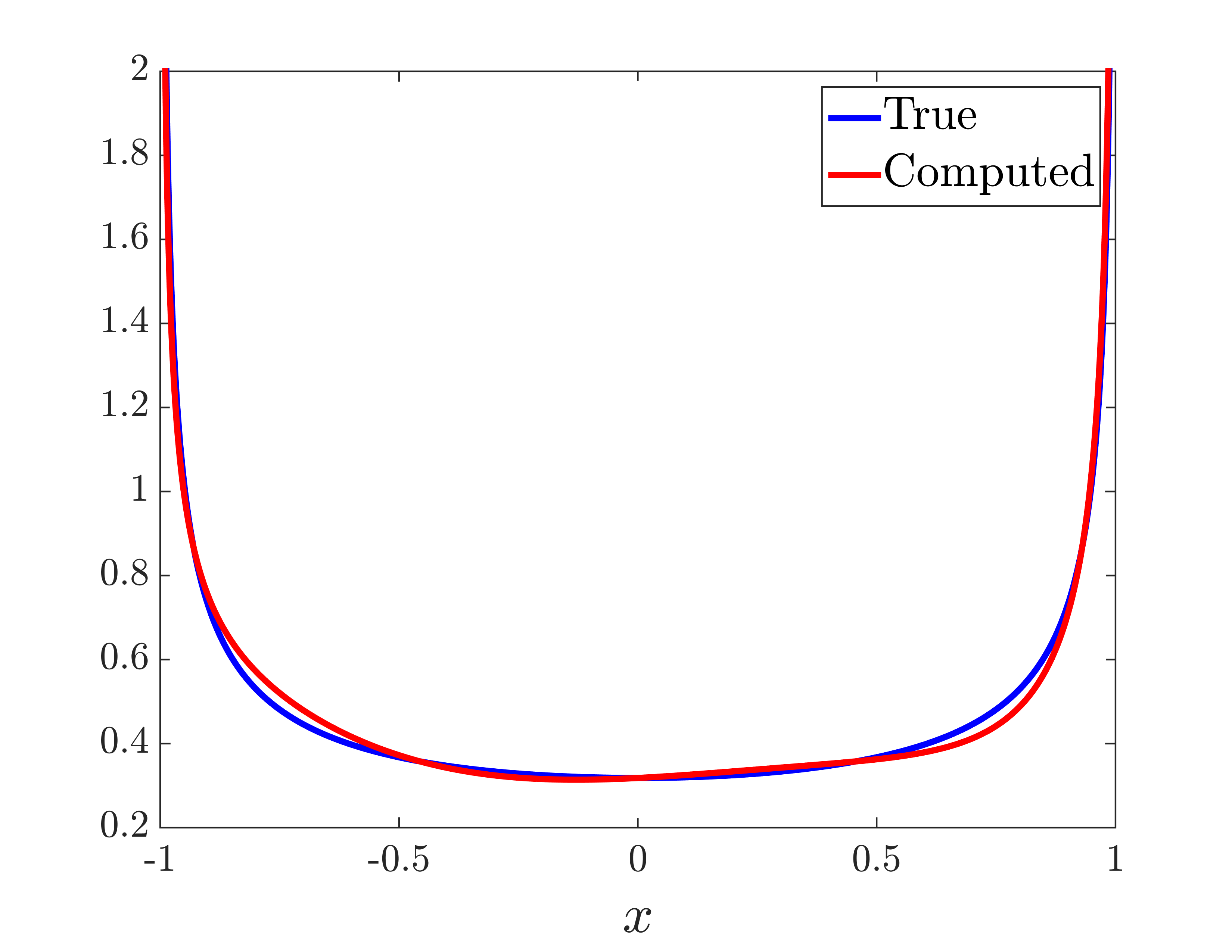}}
\put(150,0){\includegraphics[width=60mm]{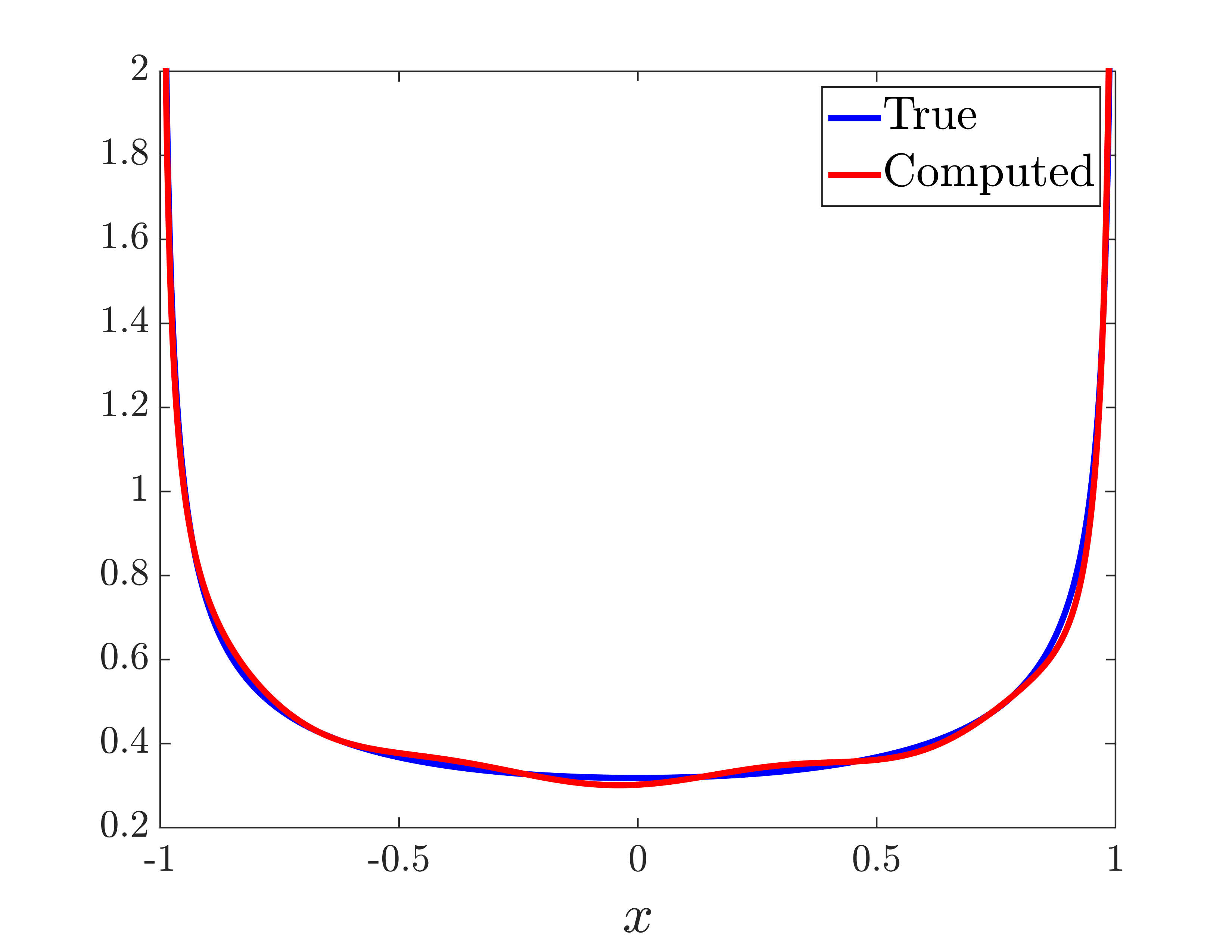}}
\put(310,0){\includegraphics[width=60mm]{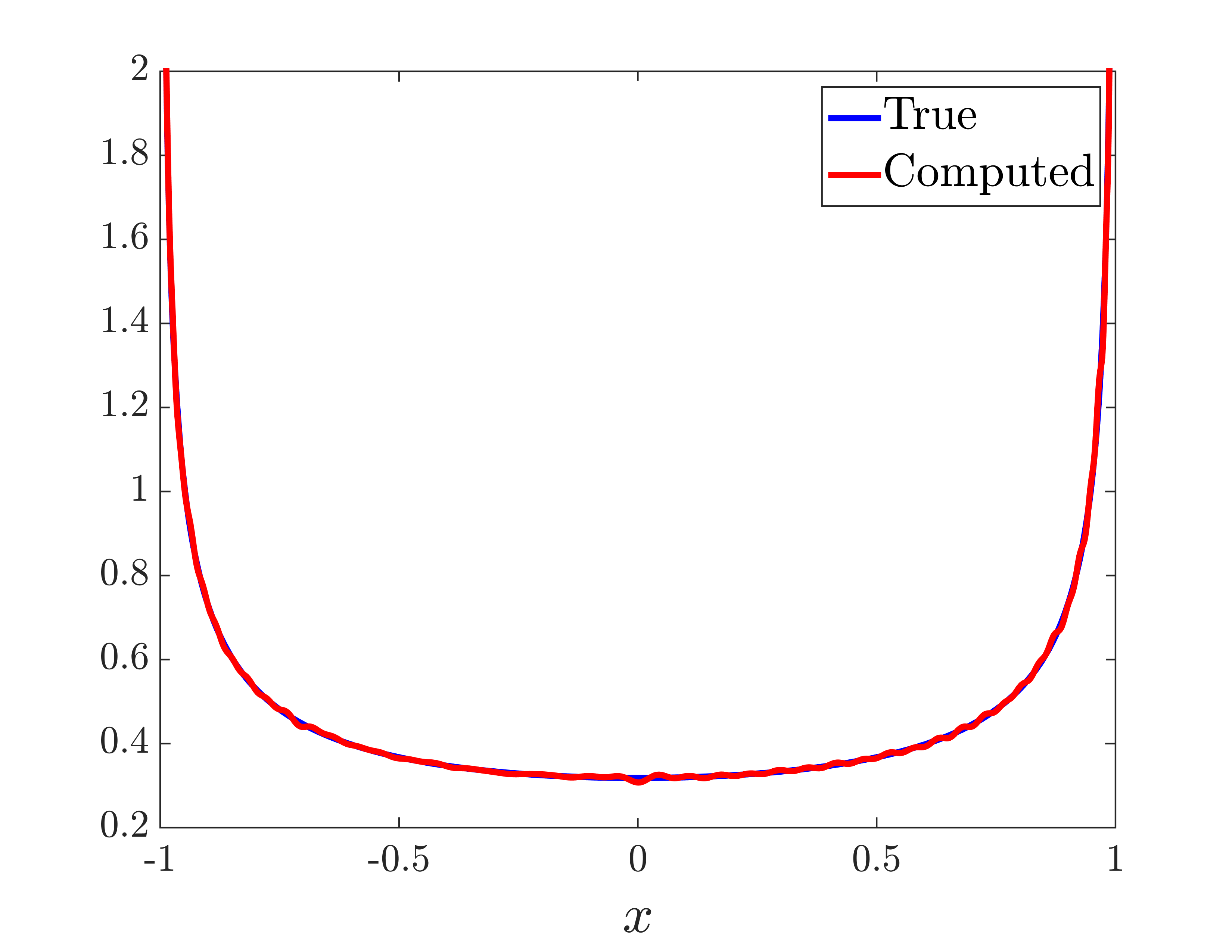}}

\put(65,85){\footnotesize $k = 5$}
\put(225,85){\footnotesize $k = 10$}
\put(385,85){\footnotesize $k = 100$}

\end{picture}
\caption{\small Logistic map: approximation of the density. Only the first moment was given as input to optimization problem~(\ref{opt:relax}).}
\label{fig:logistic}
\end{figure*}

\begin{table}[ht]
\centering
\caption{\rm \small Logistic map: comparison of moments computed by the SDP relaxation~(\ref{opt:relax}) with the true moments computed analytically. Only the first moment was given as data input to~(\ref{opt:relax}).}  \label{tab:log}\vspace{2mm}
\begin{tabular}{ccccccccccc}
\toprule
Moments & $x$ & $x^2$ & $x^3$ & $x^4$ & $x^5$ & $x^6$ & $x^7$ &  $x^8$ & $x^9$ & $x^{10}$ \\\midrule
SDP & 0.0000 &   0.5000 &    0.0047 &    0.3750   & 0.0062  &  0.3131 &   0.0068 & 0.2746 &   0.0071 &   0.2477 \\\midrule
True & 0.0000  &  0.5000  &   0.0000  &   0.3750 &  0.0000 &    0.3125 &   0.0000 & 0.2734  &       0.0000 &  0.2461 \\
\bottomrule
\end{tabular}
\end{table}


\subsection{H\' enon map}
Our second example is the H\' enon map:
\begin{align*}
x_1^+ &=  1 - 1.4x_1^2 + x_2\\
x_2^+ &= 0.3x_1.
\end{align*}
The set $\Xf$ is the box $[-1.5,1.5]\times [-0.4,0.4]$ expressed as
\[
\Xf = \{ x\mid (x_1-1.5)(1.5-x_1) \ge 0,\; (x_2-0.4)(0.4-x_2) \ge 0\}.
\]
The goal is to compute the moments of the physical measure $
\mu$ on $\Xf$. The available information is only the first moment $\bs y^{\mr{num}}_{(1,0)}$ (i.e., the expectation of the first coordinate) of $\mu$ approximately computed using~(\ref{eq:mom_num}) with $f(x) = x_1$. The objective function is $F(\bs y) = (\bs y_{(1,0)} - \bs y^{\mr{num}}_{(1,0)})^2$. Table~\ref{tab:henon} compares the moments returned by relaxation~(\ref{opt:relax}) of order $k = 10$ with moments computed numerically using~(\ref{eq:mom_num}); we observe a good agreement. Figure~\ref{fig:henon} then compares the support approximations computed using using the Christoffel polynomial~(\ref{eq:confReg}) with $\bs y = \bs y^k$.

\begin{table}[ht]
\centering
\caption{\rm \small H\' enon map: comparison of moments computed by the SDP relaxation~(\ref{opt:relax}) and using~(\ref{eq:mom_num}). Only the first moment corresponding to $x_1$ was given as data input to~(\ref{opt:relax}).}  \label{tab:henon}\vspace{2mm}
\begin{tabular}{cccccccccc}
\toprule
Moments & $x_1$ & $x_2$ & $x_1^2$ & $x_1x_2$ & $x_2^2$ & $x_1^3$ & $x_1^2x_2$ &  $x_1x_2^2$ & $x_2^3$ \\\midrule
SDP& 0.2570 & 0.0771 & 0.5858 & -0.0379 & 0.0527 & 0.2468 & 0.0131 & -0.0140 & 0.0067 \\\midrule
Numeric & $0.2570
$ & $0.0771$ & $0.5858$ & -0.0291 & $0.0527$ & $0.2320$ & $0.0510$ &-0.0174 & $0.0063$ \\
\bottomrule
\end{tabular}
\end{table}

\new{
\subsection{Lorenz system}
Next, we consider the classical Lorenz system
\begin{align*}
	\dot{x}_1 &= 10(x_2-x_1) \\
	 \dot{x}_2 & = x_1(28-x_3)-x_2 \\
	  \dot{x}_3 &= x_1x_2-\frac{8}{3}x_3
\end{align*}
scaled by the linear coordinate transformation $\hat x = \mr{diag}([1/25,1/30,1/50]) x$. The goal is to compute the moments of the physical measure $\mu$ which is supported on the Lorenz attractor. Here we investigate the effect of the amount of information available for the computation in terms of the number of moments used in the objective function $F(\bs y) = \sum_{i=1}^N  (\bs y_i - \bs y^{\mr{num}}_i)^2  $, where $\bs y^{\mr{num}}_i$ are approximations of the true moments computed using simulation. We investigate as well the effect of the accuracy of the approximate moments provided by varying the simulation length $M$. The accuracy is measured in terms of the percentage root mean square error on the sequence of the first 56 moments (i.e., moments up to degree 5) with respect the numerical approximations from a simulation of length $10^6$. Figure~\ref{fig:lorenz} shows the results for $M = 10^2$ and $M=10^6$ with $N \in \{1,\ldots,10\}$. We observe a rather small impact of the accuracy of the moments provided (determined by $M$); on the other hand, the number of moments provided $N$ plays a significant role, resulting in an error of roughly $15 \%$ with one moment provided and error of roughly $0.05 \%$ with seven or more moments.

\begin{figure*}[h!]
\begin{picture}(140,180)
\put(130,3){\includegraphics[width=80mm]{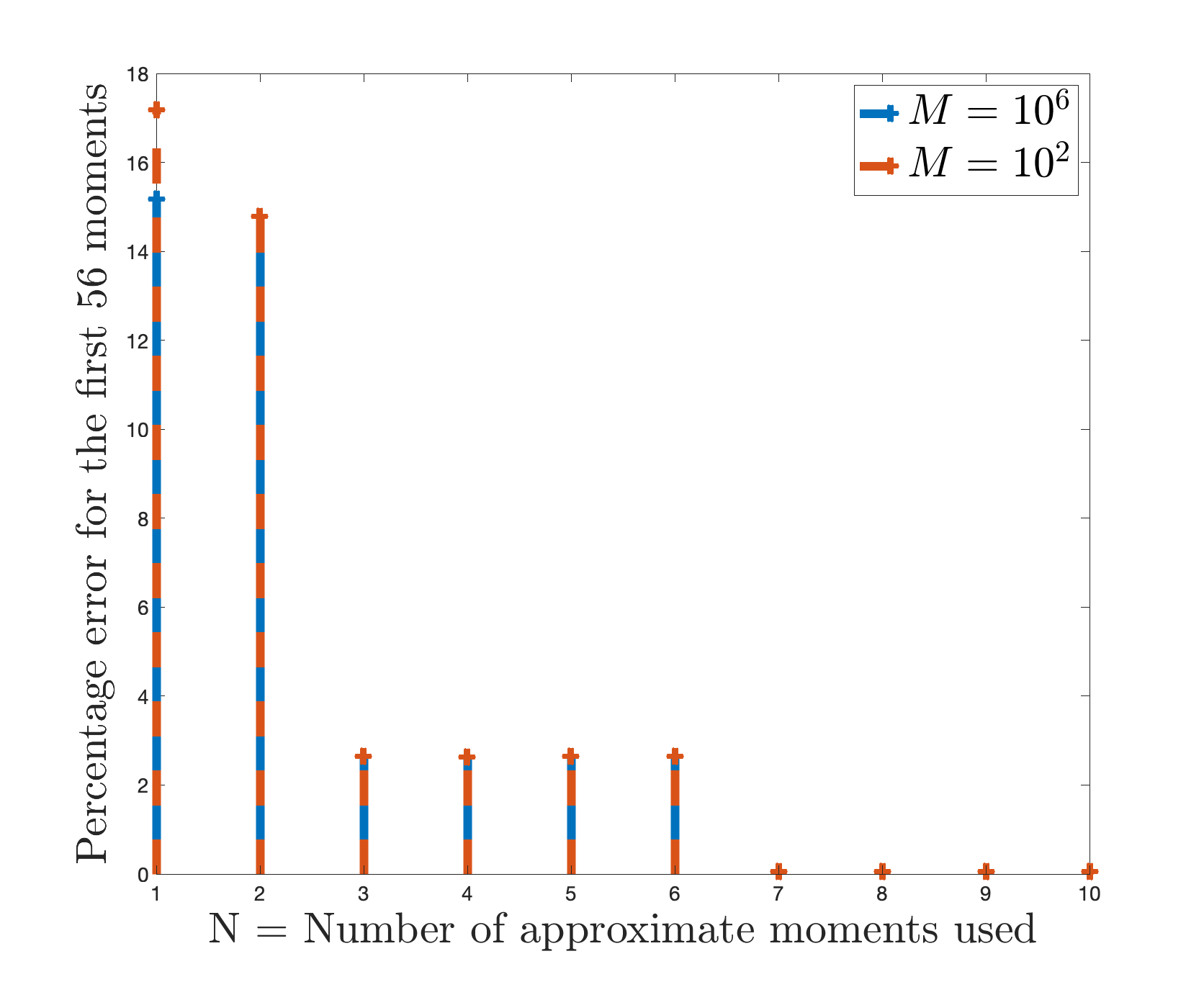}} 
\end{picture}
\caption{Approximation of the support of the invariant measure of the H\'enon map, confidence regions of the Christoffel polynomial built from 20 moments : green 90$\,\%$, red 50$\,\%$, blue simulated trajectories.}
\label{fig:lorenz}
\end{figure*}

}

\begin{figure*}[t!]
\begin{picture}(140,200)
\put(130,3){\includegraphics[width=90mm]{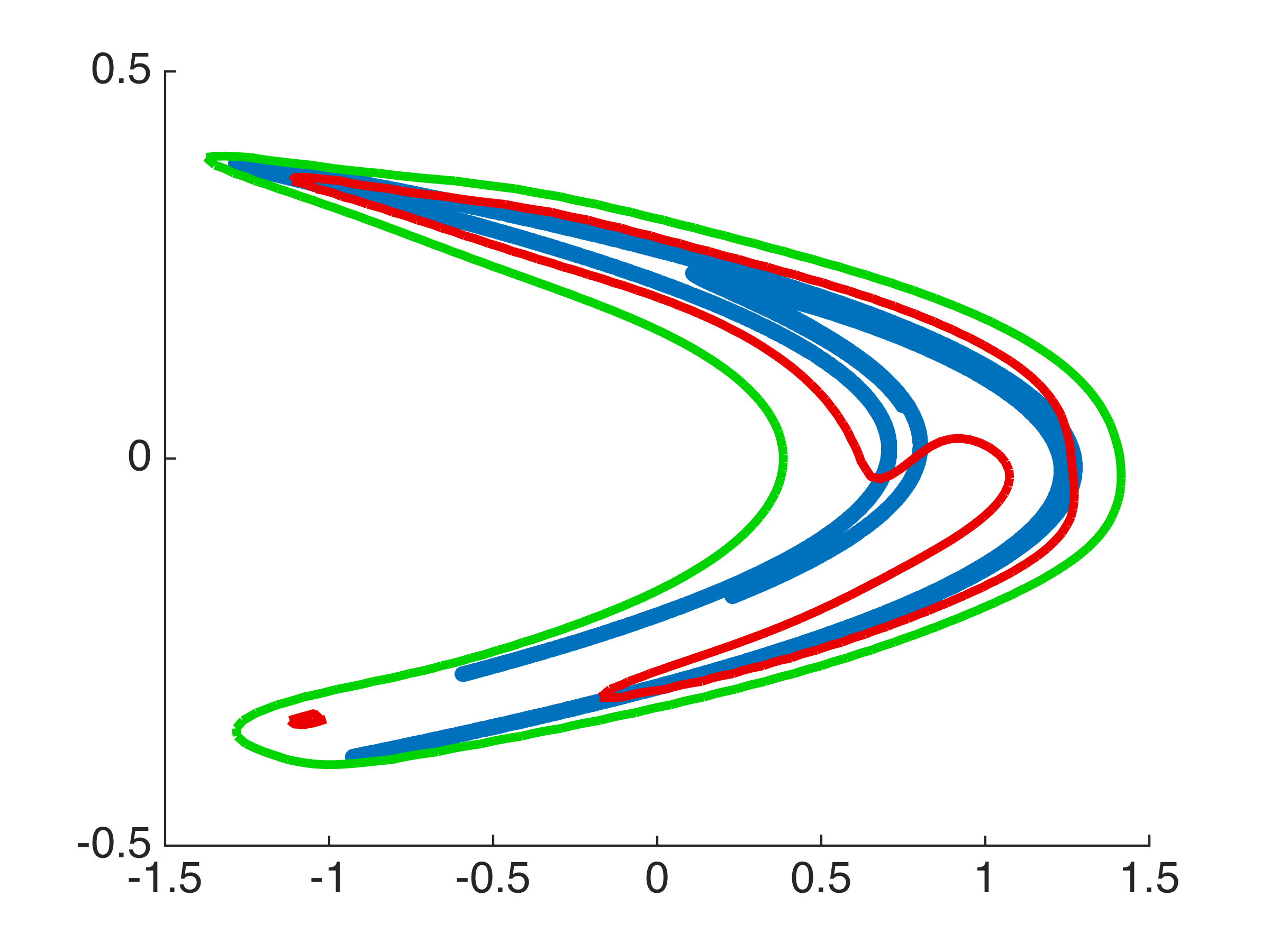}} 
\put(258,-2){ $x_1$}
\put(135,98){$x_2$}
\end{picture}
\caption{Approximation of the support of the invariant measure of the H\'enon map, confidence regions of the Christoffel polynomial built from 20 moments : green 90$\,\%$, red 50$\,\%$, blue simulated trajectories.}
\label{fig:henon}
\end{figure*}

\new{
\subsection{Stochastic processes}
\paragraph{Ornstein–Uhlenbeck} To demonstrate the approach of Section~\ref{sec:contStoch}, we first choose the classical Ornstein–Uhlenbeck process given by
\[
dx(t) = -ax(t)dt + \sigma dW(t),
\]
where $W(t)$ is the Wiener process. For $a > 0$, this process has a unique invariant measure equal to $\mathcal{N}(0,\frac{\sigma^2}{2\theta})$, i.e., the normal distribution with zero mean and standard deviation $\frac{\sigma}{\sqrt{2\theta}}$. In order to compute the moments, we solved~(\ref{opt:relax}) without the objective function (i.e., as a feasiblity problem). The results for $a =1$ and $\sigma =1$ and $d = 10$ are shown in Table~\ref{tab:Ornstein}; we observe a precise match.

\begin{table}[ht]
\centering\small
\caption{\rm \small \new{Ornstein–Uhlenbeck process: comparison of moments computed by the SDP relaxation~(\ref{opt:relax}) with moments computed analytically.}} \label{tab:Ornstein}\vspace{2mm}
\begin{tabular}{ccccccccccc}
\toprule
Moments & $x$ & $x^2$ & $x^3$ & $x^4$ & $x^5$ & $x^6$ & $x^7$ &  $x^8$ & $x^9$ & $x^{10}$ \\\midrule
SDP &  0 & 0.5 & 0 & 0.75 &  0 & 1.8750 & 0 & 6.5625 & 0 & 29.5312 \\\midrule
True & 0 & 0.5 & 0 & 0.75 &  0 & 1.8750 & 0 & 6.5625 & 0 & 29.5312\\
\bottomrule
\end{tabular}
\end{table}

\paragraph{Nonlinear drift} Next, we test the approach on a stochastic differential equation with a nonlinear drift
\[
dx(t) = -ax^3(t) dt + \sigma dW(t).
\]
For $a > 0$, this stochastic process is uniformly ergodic and hence admits a unique invariant measure~\cite{meyn2012markov}. Table~\ref{tab:nonlinearDrift} shows the results of solving~(\ref{opt:relax}) for $d = 10$ in comparison with Monte Carlo simulation using the Euler forward discretization and $10^5$ sample trajectories.

\begin{table}[ht]
\centering\small
\caption{\rm \small \new{Nonlinear drift: comparison of moments computed by the SDP relaxation~(\ref{opt:relax}) with moments computed using a Monte Carlo simulation.}} \label{tab:nonlinearDrift}\vspace{2mm}
\begin{tabular}{ccccccccccc}
\toprule
Moments & $x$ & $x^2$ & $x^3$ & $x^4$ & $x^5$ & $x^6$ & $x^7$ &  $x^8$ & $x^9$ & $x^{10}$ \\\midrule
SDP &  -0.0000 &   0.4758 &    0.0000 &   0.5000 &  -0.0000 &    0.7138 &   0.0000 &    1.2500 &  -0.0000 &   2.4982 \\\midrule
Mont Carlo & 0.0001  &  0.4776  &  0.0011  &  0.4997  &  0.0039  &  0.7192 &   0.0130  &  1.2637  &  0.0413  &  2.5683\\
\bottomrule
\end{tabular}
\end{table}

\paragraph{Detecting nonexistence of invariant measures}
In this section we demonstrate the ability of the approach to numerically prove the non-existence of an invariant measure as described in Section~\ref{sec:contStoch}. For this, we consider the process
\[
dx(t) = -ax^3(t) dt + (1+x^2(t))dW(t).
\]
Clearly, the process does not admit an invariant measure for $a < 0$. The situation is more interesting for $a >0$. We carried out computations for $a \in \{0,1,\ldots,10\}$, solving the SDP relaxation~(\ref{opt:relax}). The results indicated that no invariant measure exists for this range of values of $a$, although the degree to certify it (i.e., to render~(\ref{opt:relax}) infeasible) increases with $a$. This is intuitive since, vaguely speaking, the larger the value of $a$, the ``more stable'' the process is. Table~\ref{tab:nonlinearDrift} shows this dependence; it appears that the minimum degree obeys the linear relation $d = 2a+2$, although we did not attempt to prove this analytically. It should be noted that the infeasibility of~(\ref{opt:relax}) was decided by the interior point solver SeDuMi; in particular we did not use rigorous certification tools such as \cite{LMIexact} and therefore these results should be understood as a strong numerical evidence of non-existence rather than a rigorous proof.

\begin{table}[ht]
\centering\small
\caption{\rm \small \new{Detecting nonexistence of invariant measure: minimum degree required to render the SDP relaxation~(\ref{opt:relax}) infeasible versus the value of the drift coefficient $a$.}} \label{tab:nonlinearDrift}\vspace{2mm}
\begin{tabular}{cccccccccccc}
\toprule
drift coefficient $a$ &  0 &   1 &    2 &   3 &  4 &    5 &   6 &    7 & 8 &   9 & 10  \\\midrule
minimum degree for infeasiblity of~(\ref{opt:relax})  & 2  &  4  &  6  &  8  &  10  &  12 &   14  &  16  &  18  &  20 & 22 \\
\bottomrule
\end{tabular}
\end{table}

}

\new{\section{Conclusion}
This work presented a convex-optimization-based method for computation of invariant measures for continuous and discrete time deterministic and stochastic systems. We described how to cast the problem of invariant set computation as an infinite-dimensional LP in the space of Borel measures and how to target particular invariant measures by the choice of an objective functional. We showed how this infinite-dimensional LP can be approximated by a sequence of finite-dimensional SDPs with a guaranteed asymptotic convergence and how the results of this SDP can be used for support approximation of the invariant measure using the Christoffel-Darboux kernel.  Interesting by-products of the approach are a method to certify non-existence of invariant measures and a method to compute eigenmeasures of the Perron-Frobenius operator.

Future work should focus on improving the scalability of the approach by exploit sparsity or symmetries of the problem at hand or on developing a data-driven counterpart of the approach where the model is unknown and only finite collection of observations is available, in the spirit of~\cite{kodaMCIdata}.

}

\section*{Acknowledgment}

This work benefited from discussions with Victor Magron. This research was supported in part by the ARO-MURI grant W911NF-17-1-0306. The research of M. Korda was also supported by the Swiss National Science Foundation under grant P2ELP2\_165166.


\begin{thebibliography}{10}

\bibitem{bochi2018}
J. Bochi.
\newblock{Ergodic optimization of Birkhoff averages and Lyapunov exponents.}
\newblock{Proceedings of the International Congress of Mathematicians, 2018.}


\bibitem{bollt2005path}
E. M. Bollt.
\newblock The path towards a longer life: On invariant sets and the escape time
  landscape.
\newblock {International Journal of Bifurcation and Chaos},
  15(05):1615--1624, 2005.

\bibitem{chernyshenko} 
S. I. Chernyshenko, P. Goulart, D. Huang, A. Papachristodoulou.  \newblock{Polynomial sum of squares in fluid dynamics: a review with a look ahead.} 
\newblock{Philosophical Transactions of the Royal Society A: Mathematical, Physical and Engineering Sciences 372.2020 (2014): 20130350.}


\bibitem{cross1998approximating}
W. P. Cross, H. E. Romeijn, R. L. Smith.
\newblock{Approximating extreme points of infinite dimensional convex sets.}
\newblock{Mathematics of Operations Research, 23(2):433--442, 1998.}

\bibitem{fantuzzi}
G. Fantuzzi, D. Goluskin,  D. Huang, S. I. Chernyshenko. 
\newblock{Bounds for deterministic and stochastic dynamical systems using sum-of-squares optimization.}
 \newblock{ {SIAM} Journal on Applied Dynamical Systems, 15(4), 1962-1988.}



\bibitem{fazel2002}
M. Fazel. Matrix rank minimization with applications. PhD Thesis, Elec. Eng. Dept, Stanford University, 2002. 


\bibitem{gaitsgory2009linear}
V. Gaitsgory, M. Quincampoix.
\newblock Linear programming approach to deterministic infinite horizon optimal
  control problems with discounting.
\newblock {SIAM Journal on Control and Optimization}, 48(4):2480--2512,
  2009.

\bibitem{goluskin}
D. Goluskin. 
\newblock{Bounding averages rigorously using semidefinite programming: mean moments of the Lorenz system.}
\newblock{ Journal of Nonlinear Science, 28(2), 621-651.}

\bibitem{gustafsson2009bergman}
B. Gustafsson, M. Putinar, E. B. Saff, N. Stylianopoulos.
\newblock Bergman polynomials on an archipelago: estimates, zeros and shape
  reconstruction.
\newblock {Advances in Mathematics}, 222(4):1405--1460, 2009.

\bibitem{henrion2012kybernetika}
D. Henrion.
\newblock Semidefinite characterisation of invariant measures for
  one-dimensional discrete dynamical systems.
\newblock {Kybernetika}, 48(6):1089--1099, 2012.


\bibitem{gloptipoly3}
D. Henrion, J. B. Lasserre, J. L{\"o}fberg.
\newblock Gloptipoly 3: moments, optimization and semidefinite programming.
\newblock {Optimization Methods and Software}, 24:761--779, 2009.

\bibitem{HernandezLasserre}
O. Hern\'andez-Lerma, J. B. Lasserre.
\newblock {Discrete-time Markov control processes: basic optimality
  criteria}.
\newblock Springer, 1996.

\bibitem{jenkinson2006ergodic}
O. Jenkinson.
\newblock Ergodic optimization.
\newblock {Discrete and Continuous Dynamical Systems}, 15(1):197, 2006.

\bibitem{jenkinson2019}
O. Jenkinson.
\newblock{Ergodic optimization in dynamical systems.}
\newblock{Ergodic Theory and Dynamical Systems 39.10 (2019): 2593-2618.}



\bibitem{junge2017sighting}
O. Junge, I. G. Kevrekidis.
\newblock On the sighting of unicorns: A variational approach to computing
  invariant sets in dynamical systems.
\newblock {Chaos: An Interdisciplinary Journal of Nonlinear Science},
  27(6):063102, 2017.

\bibitem{kodaMCIdata}
Korda, Milan. 
\newblock{Computing controlled invariant sets from data using convex optimization.}
\newblock{SIAM Journal on Control and Optimization,  arXiv preprint arXiv:1912.03256 (2019).}


\bibitem{kordaMCI}
M. Korda, D. Henrion, C. N. Jones.
\newblock Convex computation of the maximum controlled invariant set for
  polynomial control systems.
\newblock {SIAM Journal on Control and Optimization}, 52(5):2944--2969,
  2014.

\bibitem{korda2016controller}
M. Korda, D. Henrion, C. N. Jones.
\newblock Controller design and value function approximation for nonlinear
  dynamical systems.
\newblock {Automatica}, 67:54--66, 2016.

\bibitem{korda2017convergence_opt}
M. Korda, D. Henrion, C. N. Jones.
\newblock Convergence rates of moment-sum-of-squares hierarchies for optimal
  control problems.
\newblock {\em Systems \& Control Letters}, 100:1--5, 2017.

\bibitem{LasotaMackey1994}
A. Lasota, M. C. Mackey.
Chaos, fractals, and noise -
Stochastic aspects of dynamics.
Springer, 1994.

\bibitem{LMIexact}
Henrion, Didier, Simone Naldi, and Mohab Safey El Din. 
\newblock Exact algorithms for linear matrix inequalities.
\newblock {{SIAM} Journal on Optimization}, 26.4 (2016): 2512-2539.


\bibitem{lasserreBook}
J. B. Lasserre.
\newblock {Moments, positive polynomials and their applications,}.
\newblock Imperial College Press, 2010.

\bibitem{lasserre2001global}
J. B. Lasserre.
\newblock Global optimization with polynomials and the problem of moments.
\newblock {SIAM Journal on Optimization}, 11(3):796--817, 2001.

\bibitem{lasserre2008nonlinear}
J. B. Lasserre, D. Henrion, C. Prieur, E. Tr{\'e}lat.
\newblock Nonlinear optimal control via occupation measures and
  {LMI} relaxations.
\newblock {SIAM Journal on Control and Optimization}, 47(4):1643--1666,
  2008.

\bibitem{lasserre2017empirical}
J. B. Lasserre, E. Pauwels.
\newblock The empirical christoffel function in statistics and machine
  learning.
\newblock {\tt arXiv:1701.02886}, 2017.

\bibitem{yalmip}
J.~L{\" o}fberg.
\newblock Yalmip : A toolbox for modeling and optimization in {Matlab}.
\newblock {Proceedings of the IEEE CACSD Conference}, Taipei, Taiwan, 2004.

\bibitem{MagronHenrionForets}
V. Magron, D. Henrion, M. Forets.
Semidefinite characterization of invariant measures for polynomial systems.
Submitted for publication, 2018.


\bibitem{meyn2012markov}
S. P. Meyn,  R. L. Tweedie.
\newblock {Markov chains and stochastic stability}.
\newblock Springer, 2012.

\bibitem{mezic2004comparison}
Mezi{\'c}, Igor, Banaszuk, Andrzej.
\newblock Comparison of systems with complex behavior.
\newblock {Physica D: Nonlinear Phenomena}, 197:101--133, 2004.

\bibitem{ozay2015set}
N. Ozay, C. Lagoa, M. Sznaier.
\newblock Set membership identification of switched linear systems with known
  number of subsystems.
\newblock {Automatica}, 51:180--191, 2015.

\bibitem{ozay2014convex}
N. Ozay, M. Sznaier, C. Lagoa.
\newblock Convex certificates for model (in) validation of switched affine
  systems with unknown switches.
\newblock {IEEE Transactions on Automatic Control}, 59(11):2921--2932,
  2014.

\bibitem{pauwels2016nips}
E. Pauwels, J. B. Lasserre.
\newblock Sorting out typicality with the inverse moment matrix sos polynomial.
\newblock {\em Advances in Neural Information Processing Systems (NIPS)}, 2016.

\bibitem{phelps2001lectures}
R. R. Phelps.
\newblock {Lectures on {C}hoquet's theorem}.
\newblock Springer, 2001.

\bibitem{putinar}
M.~Putinar.
\newblock Positive polynomials on compact semi-algebraic sets.
\newblock {Indiana University Mathematics Journal,}, 42:969--984, 1993.

\bibitem{schlosser_attractor}
C. Schlosser, M. Korda. 
\newblock{Converging outer approximations to global attractors using semidefinite programming.}
\newblock{arXiv preprint arXiv:2005.03346 (2020).}


\bibitem{sedumi}
J.~Sturm.
\newblock Using SeDuMi 1.02, a Matlab toolbox for optimization over symmetric
  cones.
\newblock {Optimization Methods and Software}, 11:625--653, 1999.

\bibitem{tobasco} I. Tobasco, D. Goluskin, C. Doering. \newblock {Optimal bounds and extremal trajectories for time averages in dynamical systems.} \newblock {APS (2017): M1-002}.




\end{thebibliography}
\end{document}